\newcommand{\E}{\mathrm{E}}
\newcommand{\OO}{\mathrm{O}}
\renewcommand{\H}{\mathcal{H}}
\newcommand{\J}{\mathcal{J}}
\newcommand{\T}{\mathcal{T}}
\renewcommand{\S}{\mathcal{S}}
\newcommand{\U}{\mathcal{U}}
\newcommand{\D}{\mathcal{D}}
\newcommand{\R}{\mathbb{R}}
\newcommand{\C}{\mathbb{C}}
\newcommand{\N}{\mathbb{N}}
\newcommand{\<}{\left\langle}
\renewcommand{\>}{\right\rangle}
\renewcommand{\phi}{\varphi}
\newcommand{\Id}{\,\mathrm{Id}}
\newcommand{\scal}{\,\mathrm{scal}}
\newcommand{\Ric}{\mathrm{Ric}}
\newcommand{\eps}{\varepsilon}
\newcommand{\Dg}{\,\mathcal{D}\gamma}
\newcommand{\Hom}{\mathrm{Hom}}
\newcommand{\id}{\mathrm{id}}
\newcommand{\Pexp}{\mathrm{P^{\nabla}exp}}
\newcommand{\PP}{\mathcal{P}}
\newcommand{\PMxy}{{\mathfrak{P}(M)_x^y}}
\newcommand{\PMt}{{\mathfrak{P}(M,t)}}
\newcommand{\PMtx}{{\mathfrak{P}(M,t)_x}}
\newcommand{\PMty}{{\mathfrak{P}(M,t)^y}}
\newcommand{\PMtxy}{{\mathfrak{P}(M,t)_x^y}}
\newcommand{\PPM}{{\mathfrak{P}(\PP,M)}}
\newcommand{\PPMx}{{\mathfrak{P}(\PP,M)_x}}
\newcommand{\PPMy}{{\mathfrak{P}(\PP,M)^y}}
\newcommand{\PPMxy}{{\mathfrak{P}(\PP,M)_x^y}}
\newcommand{\Mr}{M^{\times r}}
\newcommand{\hme}{^{-1}}
\newcommand{\kD}{k^\Delta}
\newcommand{\kH}{k^H}
\newcommand{\kHN}{k^H_{(N)}}
\newcommand{\ki}{k^1}
\newcommand{\kii}{k^2}
\newcommand{\kiii}{k^3}
\newcommand{\kiv}{k^4}
\newtheorem{thm}{Theorem}
\newtheorem{cor}{Corollary}
\newtheorem{prop}{Proposition}
\newtheorem{lemma}{Lemma}
\theoremstyle{definition}
\newtheorem{ex}{Example}
\newtheorem{rem}{Remark}
\newtheorem{definition}{Definition}
\long\def\symbolfootnote[#1]#2{\begingroup%
\def\thefootnote{\fnsymbol{footnote}}\footnote[#1]{#2}\endgroup} 
\title[Renormalized Integrals and a Path Integral Formula for the Heat Kernel]{Renormalized Integrals and a Path Integral Formula for the Heat Kernel on a Manifold}
\author{Christian B\"ar}
\address{Universit\"at Potsdam, Institut f\"ur Mathematik, Am Neuen Palais 10, 14469 Potsdam, Germany}
\email{baer@math.uni-potsdam.de}
\urladdr{http://geometrie.math.uni-potsdam.de/}
\dedicatory{Dedicated to Prof.\ Steven Rosenberg on the occasion of his sixtieth birthday}
\thanks{The author thanks the \emph{Sonderforschungsbereich 647}, funded by \emph{Deutsche Forschungsgemeinschaft}, for financial support.}
\keywords{Renormalized integral, path integral, Feynman-Kac formula, generalized Laplace operator, Riemannian manifold, heat equation, heat kernel}
\subjclass[2010]{Primary 58J35; Secondary 58J65, 47D08}
\begin{document}

\maketitle

\begin{abstract}
We introduce renormalized integrals which generalize conventional measure theoretic integrals.
One approximates the integration domain by measure spaces and defines the integral as the limit of integrals over the approximating spaces.
This concept is implicitly present in many mathematical contexts such as Cauchy's principal value, the determinant of operators on a Hilbert space and the Fourier transform of an $L^p$-function.

We use renormalized integrals to define a path integral on manifolds by approximation via geodesic polygons.
The main part of the paper is dedicated to the proof of a path integral formula for the heat kernel of any self-adjoint generalized Laplace operator acting on sections of a vector bundle over a compact Riemannian manifold.
\end{abstract}

\section{Introduction}

Path and functional integrals are an important tool in quantum field theory but in many cases a solid mathematical foundation is still lacking.
In some cases one knows that the desired integral cannot be realized by a conventional integral because the necessary measure cannot exist.
We propose a mathematical framework that might be able to deal with this difficulty.
We call it \emph{renormalized integrals}.
The idea is that the measure on the space (or even the space itself) over which we want to integrate might not exist but we can approximate it by measure spaces and then define the integral as the limit of the conventional integrals on these measure spaces.
Details and examples are given in Section~\ref{sec:RenInt}.
It turns out that a variety of mathematical concepts can be regarded as renormalized integrals such as Cauchy's principal value, the determinant of operators on a Hilbert space and the Fourier transform of an $L^p$-function.

We then concentrate on the space of paths on a compact Riemannian manifold.
We approximate it by spaces of piecewise geodesics and define the path integral as a renormalized integral.
The functions we would like to path integrate are induced by integral kernels on the manifold.
This is explained in Example~\ref{ex:qQ}.
We call two integral kernels \emph{heat-related} if their difference can be estimated by the heat kernel for the Laplace-Beltrami operator.
The central result of Section~\ref{sec:PathIntegrals} is Proposition~\ref{prop:replace}, where we show that heat-related integral kernels give rise to functions on path space with equal path integrals.
Lemma~\ref{lem:gaussest} is a useful criterion that allows one to prove that two integral kernels are heat-related.

In Theorem~\ref{thm:heatkernel} we give a path integral formula for the heat kernel of any self-adjoint generalized Laplace operator acting on sections of a vector bundle over the manifold.
This improves the results of \cite{BP} (and also those of the earlier article \cite{AD} which deals with scalar operators) where one has a similar formula for the solutions to the heat equation but only weaker results for the heat kernel itself.
This difference is subtle; it is analogous to the passage from Brownian motion to the Brownian bridge in stochastic analysis.
The present paper refines the analysis in \cite{BP}.
The concept of Chernoff equivalence used in \cite{BP,SWW03,SWW07} had to be replaced by more refined ``heat bounds''.

It is also possible to write the heat kernel as an integral over path space equipped with the Wiener measure.
This is known as the Feynman-Kac formula.
It has the advantage that the whole machinery of measure and integration theory and stochastic analysis can be applied, see e.g.\ \cite{BP2,Bismut,Elworthy,Emery,Gangolli,HT,H,Stroock} for this approach.
But it also has disadvantages: part of the function that one wants to integrate over path space gets swallowed by the Wiener measure.
For this reason one can show that the Wiener measure cannot be modified in such a way that one can write the solutions to the Schr\"odinger equation as a path integral; see \cite[Sec.~4.6]{JL}.
This is a serious drawback because the Schr\"odinger equation in quantum mechanics was Feynman's original concern when he invented his path integral approach which has turned out to be so influential in theoretical physics to date.
There is promising indication that renormalized integrals will be able to deal with the Schr\"odinger equation, but the analytic details still have to be worked out.

\section{Renormalized integrals}
\label{sec:RenInt}

We start by describing the abstract concept of renormalized integrals.
Let $\J$ be a directed system, i.e., $\J$ is a set equipped with a relation $\preceq$ such that the following holds:
\begin{itemize}
\item 
Reflexivity $\quad\forall\, \T\in\J:\quad \T\preceq \T$
\item
Transitivity $\quad\forall\, \T,\S,\U\in\J:\quad \T\preceq\S\,\, \&\,\, \S\preceq\U\,\, \Rightarrow\,\, \T\preceq \U$ 
\item
Antisymmetry $\quad\forall\, \T,\S\in\J:\quad \T\preceq\S\,\, \&\,\, \S\preceq\T\,\, \Rightarrow\,\, \T=\S$
\item
$\forall\, \T,\S\in\J\,\, \exists\, \U\in\J:\quad \T\preceq\U\,\, \&\,\, \S\preceq \U$
\end{itemize}
\noindent
We call a family of measure spaces $\Omega = \{(\Omega_\T,\mu_\T)\}_{\T\in\J}$ parameterized by $\J$ a \emph{measure space family}.
We think of $\Omega$ as a virtual space that is approximated by the measure spaces $(\Omega_\T,\mu_\T)$.
Let $(X,\|\cdot\|)$ be a Banach space.

\begin{definition}\label{def:integrierbar}
By a \emph{measurable function} on a measure space family $\Omega$ with values in $X$ we mean a family $f=\{f_\T\}_{\T\in\J}$ of measurable functions $f_\T:\Omega_\T \to X$.
By abuse of notation, we write $f:\Omega \to X$ and think of $f$ as a function on the virtual space $\Omega$.
We call $f$ \emph{integrable} if $f_\T$ is eventually integrable\symbolfootnote[2]{i.e., $\exists\,\S\in\J:\,\,f_\T$ is integrable for all $\S\preceq\T$.} and the limit
\[
\fint_\Omega f(x)\D x := \varinjlim_{\T\in\J} \int_{\Omega_\T}f_\T(x)\,d\mu_\T(x) \in X
\]
exists.
In other words, we demand that the values of the integrals converge in the sense of nets.
We then call the limit $\fint_\Omega f(x)\D x$ the \emph{renormalized integral} of $f$ over $\Omega$.

Similarly, we can define measurable and integrable functions and their renormalized integrals when they take values in $\overline\R = [-\infty,\infty]$.
\end{definition}

\begin{ex}\label{ex:renint1}
Let $\J=\R_+=(0,\infty)$ and ``$\preceq$'' $=$ ``$\leq$''.
For $T\in\R_+$ let $\Omega_T = [-T,T] \subset \R$ and $\mu_T = \frac{1}{2T}\, \times $ Lebesgue measure.
Given a measurable function $f:\R\to\R$ we get a measurable function on $\Omega$ in the sense described above by putting $f_T := f|_{\Omega_T}$.
For example, for $f\equiv 1$ we have
\[
\int_{\Omega_T}f(x)\,d\mu_T(x) = \frac{1}{2T} \int_{-T}^T f(x)\,dx = 1
\]
and hence
\[
\fint_\Omega 1\, \D x = \lim_{T\to\infty} \int_{\Omega_T}f(x)\,d\mu_T(x) = 1.
\]
More generally, let $\alpha>-1$ and $f(x) = (|x|+1)^\alpha$.
Then
\[
\int_{\Omega_T}f(x)\,d\mu_T(x) = \frac{1}{2T} \int_{-T}^T (|x|+1)^\alpha\,dx = \frac{1}{T}\frac{1}{\alpha+1}\left((T+1)^{\alpha+1}-1\right)
\]
hence
\[
\fint_\Omega (|x|+1)^\alpha\, \D x = 
\begin{cases}
0, & \alpha < 0 \\
1, & \alpha = 0 \\
\infty, & \alpha > 0
\end{cases}
\]
Thus $f(x) = (|x|+1)^\alpha$ is integrable if and only if $\alpha\leq 0$.
\end{ex}

\begin{ex}\label{ex:renint2}
Let again $\J = \R_+$ but this time ``$\preceq$'' $=$ ``$\geq$''.
Let $\Omega_T$ and $\mu_T$ be as above.
Then we have for any continuous function $f:\R \to \R$,
\[
\fint_\Omega f(x)\,\D x = \lim_{T\searrow 0}\frac{1}{2T}\int_{-T}^T f(x)\, dx = f(0).
\]
\end{ex}

\begin{ex}[Cauchy's principal value]
Let $\J = (0,1)$ and ``$\preceq$'' $=$ ``$\geq$''.
For $T\in\R_+$ let $\Omega_T = [-1,-T] \cup [T,1]$ and let $\mu_T$ be the usual Lebesgue measure.
Any measurable function $f:[-1,1]\to \R$ yields a measurable function on $\Omega$ by restriction.
Now the renormalized integral is nothing but Cauchy's principal value,
\[
\fint_\Omega f(x) \, \D x  = \lim_{T\searrow 0} \left(\int_{-1}^{-T} f(x)\,dx +\int_{T}^{1} f(x)\,dx\right) = \mathrm{CH} \int_{-1}^1 f(x)\,dx .
\]
\end{ex}

\begin{ex}[Determinant of operators on a Hilbert space]
Let $\H$ be a separable real Hilbert space.
Let $\J$ be the set of all finite-dimensional subspaces of $\H$ ordered by inclusion, ``$\preceq$'' $=$ ``$\subset$''.
Every $n$-dimensional subspace $H\subset \H$ inherits an $n$-dimensional Lebesgue measure $d^nx$.
We equip $H$ with the renormalized measure $\mu_H := \pi^{-n/2}\,d^nx$ and consider the measure space family $\Omega=\{(H,\mu_H)\}_{H\subset\H,\,\dim(H)<\infty}$.
We let $I_H : H \to \H$ be the inclusion and $P_H: \H \to H$ the orthogonal projection.

Let us consider a bounded positive self-adjoint linear operator $L$ on $\H$.
We assume that $L$ is of the form $L=\Id + A$ where $A$ is of trace class.
Then the determinant of $L$ is defined and satisfies
\[
\det(L) = \prod_{j=1}^\infty (1+\lambda_j)
\]
where $\lambda_j$ are the eigenvalues of $A$ repeated according to their multiplicity, see \cite[Thm.~XIII.106]{RS4}.
We order the eigenvalues such that $|\lambda_1| \geq |\lambda_2| \geq \cdots \to 0$.

If $H \subset \H$ is an $n$-dimensional subspace and $\mu_1, \ldots, \mu_n$ are the eigenvalues of $P_H \circ L \circ I_H$, then we compute
\begin{align*}
\int_{H}\exp(-(Lx,x))\,d\mu_{H}
&=
\int_{H}\exp(-(L\circ I_H(x),I_H(x)))\,d\mu_{H} \\
&=
\int_{H}\exp(-(P_H \circ L\circ I_H(x),x))\,d\mu_{H} \\
&=
\pi^{-n/2} \int_{\R^n} \exp(-\mu_1 x_1^2) \cdots \exp(-\mu_n x_n^2)\,dx_1 \cdots dx_n \\
&=
\pi^{-n/2} \prod_{j=1}^n \int_{-\infty}^\infty \exp(-\mu_j x^2)\, dx \\
&=
\frac{1}{\sqrt{\mu_1}\cdots \sqrt{\mu_n}} \\
&=
\det(P_{H}\circ L \circ I_{H})^{-1/2} .
\end{align*}
Let $\eps>0$.
Since $B \mapsto \det(\Id+B)$ is continuous on the ideal of trace-class operators, there is a constant $\delta>0$ such that $|\det(L)^{-1/2}-\det(\Id+B)^{-1/2}|<\eps$ for all trace-class operators $B$ with trace-class norm $\|A-B\|_1<\delta$.
Choose $n$ so large that 
\[
\|A-P_{H_n}\circ A \circ I_{H_n}\|_1 = \sum_{j=n+1}^\infty |\lambda_j| < \frac{\delta}{2} 
\]
where $H_n$ is the span of the first $n$ eigenvectors.
Now let $H \subset \H$ be a finite dimensional subspace which contains $H_n$. 
Write $H = H_n \oplus V$ where $V$ is the orthogonal complement of $H_n$ in $H$.
We compute

\begin{align*}
\|A-P_H \circ A \circ I_H\|_1
&\leq
\|A-P_{H_n} \circ A \circ I_{H_n}\|_1 + \|P_H \circ A \circ I_H-P_{H_n} \circ A \circ I_{H_n}\|_1 \\
&<
\frac{\delta}{2} + \|P_V \circ A \circ I_V\|_1 \\
&=
\frac{\delta}{2} + \|P_V \circ (A-P_{H_n} \circ A \circ I_{H_n}) \circ I_V\|_1 \\
&\leq
\frac{\delta}{2} + \|P_V\| \cdot \|A-P_{H_n} \circ A \circ I_{H_n}\|_1 \cdot \|I_V\| \\
&=
\frac{\delta}{2} + \|A-P_{H_n} \circ A \circ I_{H_n}\|_1 \\
&<
\delta .
\end{align*}
Here we have freely identified operators acting on closed subspaces of $\H$ with the operators on $\H$ extended by zero to the orthogonal complement. 
Hence $\det(L)^{-1/2}$ differs from $\det(\Id+P_H \circ A \circ I_H)^{-1/2} = \int_{H}\exp(-(Lx,x))\,d\mu_{H}$ by an error smaller than $\eps$.
This shows
\[
\fint_\Omega \exp((-Lx,x)) \D x = \det(L)^{-1/2} .
\]
\end{ex}

\begin{ex}[Fourier transform of $L^p$-functions]
Fix $n\in\N$.
Let $\J$ be the set of all compact subsets of $\R^n$ ordered by inclusion, ``$\preceq$'' $=$ ``$\subset$''.
For any $K\in\J$, the corresponding measure space is $K$ together with the $n$-dimensional Lebesgue measure $d^nx$ induced from $\R^n$.
Any measurable function on $f:\R^n\to\C$ yields a measurable function on $\Omega=\{(K,d^nx)\}_{K\in\J}$ by restriction.
If $f\in L^1(\R^n)$, then both the dominated convergence theorem and the monotone convergence theorem imply
\[
\fint_\Omega f(x)\D x = \int_{\R^n} f(x)\, d^nx .
\]
In this sense, the renormalized integral generalizes the usual integral in this example.
For integrable $f$ the Fourier transform $\hat f$ is defined by 
\begin{equation}
\hat f (x) = (2\pi)^{-n/2} \int_{\R^n} e^{-i\<x,y\>}f(y)\,d^ny .
\label{eq:FourierDef}
\end{equation}
Let $1<p\le 2$ and $q$ such that $1/p + 1/q = 1$.
The Hausdorff-Young inequality \cite[Thm.~IX.8]{RS2} states that
\[
\|\hat f \|_{L^q} \leq (2\pi)^{n/2-n/p} \|f\|_{L^p} 
\]
for all $f\in L^1(\R^n) \cap L^p(\R^n)$.
Hence Fourier transformation extends uniquely to a bounded linear map $L^p(\R^n) \to L^q(\R^n)$.
However, for general $f\in L^p(\R^n)$ the integral in the original definition \eqref{eq:FourierDef} no longer exists.
We show that it does exist as a renormalized integral.

For $K\in\J$, let $\chi_K:\R^n\to\R$ be the characteristic function of $K$, i.e., 
\[
\chi_K(x) = \begin{cases}
            1, & \mbox{ for }x\in K \\
            0, & \mbox{ for }x\notin K 
            \end{cases}
\]
If $K$ contains the ball with center $0$ and radius $R$, we have
\[
\int_{\R^n} |f(y)-\chi_K(y)f(y)|^p\, d^ny
=
\int_{\R^n} (1-\chi_K(y)) |f(y)|^p\, d^ny
\le
\int_{|y|\ge R}|f(y)|^p\, d^ny .
\]
The dominated convergence theorem shows that 
\[
\int_{|y|\ge R}|f(y)|^p\, d^ny \longrightarrow 0
\]
as $R\to\infty$.
This shows 
\[
\varinjlim_{K\in\J}\chi_Kf = f \,\,\mbox{ in }\,\, L^p(\R^n)
\] 
and hence 
\[
\varinjlim_{K\in\J}\widehat{\chi_Kf} = \hat f \,\,\mbox{ in }\,\, L^q(\R^n).
\]
By H\"older's inequality, $\chi_Kf\in L^1(\R^n)$.
Therefore
\[
\widehat{\chi_Kf}(x) = (2\pi)^{-n/2}\int_{K} e^{-i\<x,y\>}f(y)\,d^ny 
\]
and hence
\begin{equation}
\hat f 
\,\,=\,\,
\varinjlim_{K\in\J}(2\pi)^{-n/2}\int_{K} e^{-i\<\cdot,y\>}f(y)\,d^ny
\,\,=\,\,
(2\pi)^{-n/2}\fint_\Omega e^{-i\<\cdot,y\>}f(y) \D y .
\label{eq:Fourier}
\end{equation}
Strictly speaking, this example requires a refinement of Definition~\ref{def:integrierbar}.
The limit in \eqref{eq:Fourier} exists in $L^q(\R^n)$ but the integrands $e^{-i\<\cdot,y\>}f(y)$ for fixed $y$ are not in $L^q(\R^n)$.
Instead of having one Banach space $X$ we could require locally convex topological vector spaces $X_0$ and $X_1$, $X_0$ continuously embedded in $X_1$, such that the functions $f_\T$ take values in $X_1$ and the integrals converge in $X_1$ but the integrals are actually in $X_0$ and the directed limit exists in $X_0$.
In our example we can then choose $X_0=L^q(\R^n)$ and $X_1=L^1_\mathrm{loc}(\R^n)$.

For the sake of simplicity we will use the simpler version of renormalized integrals as given in Definition~\ref{def:integrierbar}.
\end{ex}

\begin{rem}
In general, renormalized integrals have all properties of conventional integrals which are preserved under limits.
Given $\Omega=\{(\Omega_\T,\mu_\T)\}_{\T\in\J}$ and $(X,\|\cdot\|)$ as above, we obviously have
\begin{itemize}
\item 
Linearity: 
The space of integrable functions $f$ on $\Omega$ with values in $X$ forms a vector space and
\[
\fint_\Omega (\alpha f(x)+\beta g(x))\, \D x = 
\alpha \fint_\Omega f(x)\, \D x + \beta \fint_\Omega g(x)\, \D x
\]
for all integrable $f$ and $g$ and all numbers $\alpha$ and $\beta$.
\item
Monotonicity:
If $X=\R$ and $f$ and $g$ are integrable with $f \leq g$,  i.e., if $f_\T \leq g_\T$ holds eventually, then
\[
\fint_\Omega f(x)\, \D x \leq \fint_\Omega g(x)\, \D x
\]
\item
Triangle inequality:
If $f$ and the pointwise norm of $f$ are integrable functions on $\Omega$, then
\[
\left\|\,\fint_\Omega f(x)\,\D x\,\right\| \leq \fint_\Omega \|f(x)\|\,\D x
\]
\end{itemize}
\end{rem}

\textbf{Warning.} 
In general, the monotone convergence theorem, the dominated convergence theorem, and the Fatou lemma do not hold for renormalized integrals.
In Example~\ref{ex:renint1} the functions $f_n(x) = (|x|+1)^{-1/n}$ form a sequence of positive integrable functions converging monotonically from below to the integrable function $f(x)=1$.
But for the integrals we have
\[
\lim_{n\to\infty}\fint_\Omega f_n(x)\,\D x = 0 < \fint_\Omega f(x)\,\D x = 1 .
\]
This violates all three of the above theorems.
This also shows that the renormalized integral in Example~\ref{ex:renint1} is not induced by a measure on $\R$.
In Example~\ref{ex:renint2} the situation is different because here the renormalized integral coincides with the conventional integral with respect to the Dirac measure supported at $0$.

\section{Path integrals on manifolds}
\label{sec:PathIntegrals}

By a {\em partition} we mean a finite sequence of increasing real numbers $\PP=(0=s_0 < s_1 < \cdots < s_r=1)$.
We think of $\PP$ as a subdivision of the interval $[0,1]$ into subintervals $[s_{j-1},s_j]$.
The {\em mesh} of $\PP$ is given by $|\PP| := \max_{j=1,\dots,r}|s_j-s_{j-1}|$.

The set of partitions $\PP$ forms a directed system.
Here $\PP \preceq \PP'$ if and only if $\PP'$ is a subdivision of $\PP$, i.e., $\PP$ is a subsequence of $\PP'$.

Let $M$ be a Riemannian manifold.
A {\em piecewise smooth curve} in $M$ is a pair $(\PP,\gamma)$ where $\PP$ is a partition and $\gamma : [0,1] \to M$ is a continuous curve whose restrictions to the subintervals $[s_{j-1},s_j]$ are smooth.
A piecewise smooth curve $(\PP,\gamma)$ is called a {\em geodesic polygon} if for every $j = 1, \ldots, r$ the point $\gamma(s_j)$ is not in the cut-locus of $\gamma(s_{j-1})$ and $\gamma|_{[s_{j-1},s_j]}$ is the unique shortest geodesic joining its endpoints.
Let $\PPM := \{(\PP,\gamma)\, |\,(\PP,\gamma)\mbox{ is a geodesic polygon}\}$ be the space of all geodesic polygons parameterized on the partition $\PP$.
Moreover, given $x,y\in M$, we put $\PPMx := {\{(\PP,\gamma)\in\PPM\,|\, \gamma(0)=x\}}$, $\PPMy := \{(\PP,\gamma)\in\PPM\,|\, \gamma(1)=y\}$, and $\PPMxy := \PPMx \cap \PPMy$. 

For a fixed partition $\PP$ any geodesic polygon $(\PP,\gamma)$ is uniquely determined by the tuple of vertices $(\gamma(s_0),\ldots,\gamma(s_r))$.
Hence $\PPM$ can be identified with the set $\{(x_0,\ldots,x_r)\in M\times \cdots\times M\,|\,x_j \mbox{ does not lie in the cut-locus of }x_{j-1}\mbox{ for all }j=1,\ldots,r\}$.
This is an open and dense subset of $M\times \cdots\times M=M^{\times(r+1)}$.
We write $(\PP,\gamma(x_0,\ldots,x_r))$ for the geodesic polygon parameterized on $\PP$ with vertices $\gamma(s_j)=x_j$.
Via this identification $\PPM$ inherits a measure induced by the Riemannian product volume measure on $M^{\times(r+1)}$.
Similarly, $\PPMx$, $\PPMy$ and $\PPMxy$ inherit measures from the Riemannian product volume measures on $\Mr$, $\Mr$ and $M^{\times(r-1)}$ respectively.
We denote these measures on $\PPM$, $\PPMx$, $\PPMy$, and on $\PPMxy$ by $\Dg$.

For any partition $\PP=(s_0 < s_1 < \cdots < s_r)$, for any $m\in\N$ and any $t>0$ we define the {\em renormalization constant} by
\[
Z(\PP,m,t) := \prod_{j=1}^r (4\pi t(s_j-s_{j-1}))^{m/2} = t^{rm/2} \prod_{j=1}^r (4\pi (s_j-s_{j-1}))^{m/2}.
\]
Fix $t>0$.
For each partition $\PP$ we now have a measure space $(\PPM,{Z(\PP,\dim(M),t)\hme\cdot\Dg})$.
Denote the measure space family $\{(\PPM,{Z(\PP,\dim(M),t)\hme\cdot \Dg})_\PP\}_\PP$ by $\PMt$.
The measure space families $\PMtx$, $\PMty$, and $\PMtxy$ are defined similarly.

\begin{definition}\label{def:pfadintegrierbar}
Let $(X,\|\cdot\|)$ be a Banach space.
If  $F=\{F_\PP\}_\PP$ is an integrable function on $\PMt$ with values in $X$ in the sense of Definition~\ref{def:integrierbar}, then we call $F$ {\em path integrable}.
We write 
\[
\fint_\PMt F(\gamma)\,\Dg
\]
for the value of the integral and call it the {\em value of the path integral}.
\end{definition}

There is a certain sloppiness in this notation because in general $F$ is actually a function of the pair $(\PP,\gamma)$, not of $\gamma$ alone. 

In the same way, one defines path integrals of functions on $\PMtx$, on $\PMty$, and on $\PMtxy$.

\begin{ex}\label{ex:qQ}
Let $\E(\gamma) = \frac12 \int_0^1 |\dot\gamma(t)|^2dt$ denote the \emph{energy} of $\gamma$.
The energy is defined for all piecewise smooth curves, in particular for geodesic polygons.
We will see that the function $F(\gamma) = \exp(-\E(\gamma)/2t)$ is path integrable on $\PMtxy$.
The value of the path integral 
\[
\fint_\PMtxy \exp\left(-\frac{1}{2t} \E(\gamma)\right) \Dg
\]
turns out to be the heat kernel of the operator $\Delta + \frac13 \scal$, evaluated at the points $x$ and $y$ and at time $t$.
Here $\Delta=\delta d$ is the Laplace-Beltrami operator and $\scal$ denotes scalar curvature.
\end{ex}

\begin{ex}\label{ex:kernel}
Let $E \to M$ be a vector bundle over $M$.
Denote by $E\boxtimes E^*\to M \times M$  the exterior tensor product whose fiber over $(x,y) \in M\times M$ is given by $(E\boxtimes E^*)_{(x,y)} = {E_x \otimes E^*_y} = \Hom(E_y,E_x)$.
Let $q(t,x,y)\in \Hom(E_y,E_x)$ depend continuously on $x,y\in M$ and $t>0$.
We call such a map $q$ a \emph{continuous time-dependent integral kernel} in $E$.

Such a kernel induces a function $Q$ on geodesic polygons by
\begin{align*}
Q_t(\PP,\gamma) 
:=&
q(t(s_r-s_{r-1}),\gamma(s_r),\gamma(s_{r-1})) 
\circ \cdots \circ q(t(s_1-s_{0}),\gamma(s_1),\gamma(s_{0}))  \\ 
&
\in \Hom(E_{\gamma(0)},E_{\gamma(1)}) .
\end{align*}
If we fix $x$ and $y\in M$, then $Q$ is a function on $\PMxy$ with values in the vector space $\Hom(E_x,E_y)$.
If $q$ has the semigroup property, i.e.,
\[
\int_M q(t,x,y)\circ q(t',y,z)\, dy = q(t+t',x,z)
\]
for all $x,z \in M$ and all $t,t'>0$, then
\begin{align*}
Z(\PP,\dim(M),t)&\hme\int_\PPMxy Z(\PP,\dim(M),t) Q_t(\PP,\gamma) \Dg \\
&=
\int_\PPMxy  Q_t(\PP,\gamma) \Dg \\
&=
\int_{M^{\times(r-1)}} q(t(s_r-s_{r-1}),y,z_{r-1}) \circ \cdots \circ q(t(s_1-s_{0}),z_1,x)\,dz_1\cdots dz_{r-1} \\
&=
q(t,y,x) .
\end{align*}
Thus the function $(\PP,\gamma) \mapsto Z(\PP,\dim(M),t)\,Q_t(\PP,\gamma)$ is path integrable in this case and 
\begin{equation}
\fint_\PMtxy Z(\PP,\dim(M),t)\,Q_t(\PP,\gamma)\,\Dg = q(t,y,x).
\label{eq:tautopath}
\end{equation}
\end{ex}
Functions of the form $Q_t$ where $q(t,x,y)$ does not have the semigroup property will be of central importance.
We need a criterion that ensures the path integrability of $Q_t$.

\begin{definition}\label{def:heatbound}
Let $M$ be a compact Riemannian manifold and let $E\to M$ be a Hermitian vector bundle.
A continuous time-dependent integral kernel $q$ in $E$ is said to \emph{satisfy a heat bound} if there exist positive constants $T, C, B_1,\ldots,B_k$ such that
\[
|q(t,x,y)| \,\leq\, \kD(t,x,y) + Ct\sum_{j=1}^k \kD(B_jt,x,y)
\]
for all $t\in(0,T]$ and $x,y\in M$.
Here $\kD$ denotes the heat kernel of the Laplace-Beltrami operator $\Delta$ on $M$.
\end{definition}

\begin{definition}\label{def:heatrelated}
Let $M$ be a compact Riemannian manifold, let $E\to M$ be a Hermitian vector bundle and let $q$ and $q'$ be continuous time-dependent integral kernels in $E$.
We say that $q$ and $q'$ are \emph{heat-related} if there exist positive constants $T, C, B_1,\ldots,B_k$ and $\beta>1$ such that
\[
|q(t,x,y)-q'(t,x,y)| \,\leq\,  Ct^\beta\sum_{j=1}^k \kD(B_jt,x,y)
\]
for all $t\in(0,T]$ and $x,y\in M$.
\end{definition}

We put
\begin{equation}
e(t,x,y) := (4\pi t)^{-m/2}\exp\left(-\frac{d(x,y)^2}{4t}\right)
\label{eq:defe}
\end{equation}
where $m=\dim(M)$.
This is a continuous time-dependent integral kernel in the trivial line bundle.
It generalizes the Gaussian normal distribution on $\R^m$ to manifolds.

Here is a criterion which will allow us in concrete situations to check that two kernels are heat-related.

\begin{lemma}\label{lem:gaussest}
Let $M$ be a compact Riemannian manifold, let $E \to M$ be a Hermitian vector bundle over $M$.
Let $q$ and $q'$ be continuous time-dependent integral kernels in $E$.
If there exist $C,\alpha,\beta \geq 0$ with $\beta+\alpha/2>1$ and $T>0$ such that 
\[
|q(t,x,y) - q'(t,x,y)| \,\leq\, C\cdot e(t,x,y) \cdot d(x,y)^\alpha \cdot t^\beta
\]
for all $(t,x,y)\in (0,T]\times M \times M$, then $q$ and $q'$ are heat-related.
\end{lemma}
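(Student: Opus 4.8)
The plan is to reduce the Gaussian-type bound on $|q-q'|$ to the form required in Definition~\ref{def:heatrelated}, i.e.\ to a sum of terms $C't^{\beta'}\kD(B_jt,x,y)$ with $\beta'>1$, by absorbing the factors $e(t,x,y)$ and $d(x,y)^\alpha$ into rescaled heat kernels of the Laplace--Beltrami operator. First I would recall the standard Gaussian estimates for the heat kernel $\kD$ on a compact manifold: there are constants $c_1,c_2,C_1,C_2>0$ and $T_0>0$ such that
\[
c_1 (4\pi t)^{-m/2}\exp\!\left(-C_1\frac{d(x,y)^2}{4t}\right) \;\leq\; \kD(t,x,y) \;\leq\; C_2 (4\pi t)^{-m/2}\exp\!\left(-c_2\frac{d(x,y)^2}{4t}\right)
\]
for all $t\in(0,T_0]$ and $x,y\in M$. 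In particular, comparing with \eqref{eq:defe}, for any fixed $\lambda>0$ one has $e(\lambda t,x,y) \leq \mathrm{const}\cdot\kD(B t,x,y)$ for a suitable $B=B(\lambda)>0$, because $e(\lambda t,x,y)$ is, up to a $t$-power, a Gaussian in $d(x,y)^2/t$ and $\kD(Bt,x,y)$ dominates it from below once $B$ is chosen so that $c_2/(4B)$ beats the exponential rate of $e(\lambda t,\cdot,\cdot)$; the mismatch in the $t$-powers $(4\pi\lambda t)^{-m/2}$ versus $(4\pi Bt)^{-m/2}$ is just a constant.

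The key step is the elementary inequality: for every $\alpha\geq 0$ and every $\theta\in(0,1)$ there is a constant $C_\alpha=C_\alpha(\theta)$ with
\[
d(x,y)^\alpha \exp\!\left(-\frac{d(x,y)^2}{4t}\right) \;\leq\; C_\alpha\, t^{\alpha/2}\exp\!\left(-\theta\,\frac{d(x,y)^2}{4t}\right)
\]
for all $t>0$ and $x,y\in M$. This follows from $u^{\alpha/2}e^{-(1-\theta)u}\leq C_\alpha$ for $u=d(x,y)^2/(4t)\geq 0$, i.e.\ from the boundedness of $u\mapsto u^{\alpha/2}e^{-(1-\theta)u}$ on $[0,\infty)$. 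Applying this with $\theta=\tfrac12$ (say) converts the hypothesis into
\[
|q(t,x,y)-q'(t,x,y)| \;\leq\; C\,C_\alpha\,(4\pi t)^{-m/2}\exp\!\left(-\frac{d(x,y)^2}{8t}\right) t^{\beta+\alpha/2}.
\]
Now I rewrite $(4\pi t)^{-m/2}\exp(-d(x,y)^2/(8t))$ as a constant multiple of $e(2t,x,y)$ — precisely, $(4\pi t)^{-m/2}e^{-d(x,y)^2/(8t)} = 2^{m/2} e(2t,x,y)$ — and then invoke the Gaussian lower bound for $\kD$ to get $e(2t,x,y)\leq \mathrm{const}\cdot\kD(Bt,x,y)$ for $B$ chosen so that $c_2/(4B) \geq 1/8$, i.e.\ $B\leq c_2/2$, with $B\leq T/T_0$ shrunk if necessary so the estimate is valid on the right time range. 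Collecting constants, and setting $\beta':=\beta+\alpha/2>1$ by hypothesis, we obtain
\[
|q(t,x,y)-q'(t,x,y)| \;\leq\; C'\,t^{\beta'}\,\kD(Bt,x,y)
\]
for all $t$ in a possibly smaller interval $(0,T']$, which is exactly a one-term instance ($k=1$) of the definition of heat-relatedness.

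The main obstacle is really just bookkeeping: tracking how the three rescalings (the $\theta$ in the polynomial-absorption step, the factor $2$ turning $e(t,\cdot)$ into $e(2t,\cdot)$, and the factor $B$ in $\kD(Bt,\cdot)$) interact with the Gaussian upper/lower bounds for $\kD$, and making sure the various time thresholds $T$, $T_0$, $T'$ are chosen consistently so that all estimates hold simultaneously on $(0,T']$. There is no serious analytic difficulty; the only point requiring a little care is that the comparison $e(\lambda t,x,y)\leq\mathrm{const}\cdot\kD(Bt,x,y)$ needs the exponential rate in $\kD$'s lower bound to be at least as strong as that of $e(\lambda t,\cdot,\cdot)$, which is why one must allow $B$ to be small rather than taking $B=1$.
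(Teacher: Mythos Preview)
Your approach is essentially the same as the paper's: absorb the factor $d(x,y)^\alpha$ into the Gaussian at the cost of a factor $t^{\alpha/2}$ and a weakened exponent (your $\theta=\tfrac12$ step is exactly the paper's inequality $\tau^\alpha\le C_1 e^{\tau^2}$ with $\tau=d(x,y)/\sqrt{8t}$), arrive at a bound by $e(2t,x,y)\,t^{\beta+\alpha/2}$, and then invoke a Gaussian lower bound for $\kD$. The paper streamlines the last step by quoting the sharp lower bound $\kD(t,x,y)\ge C_4\,e(t,x,y)$ (with the correct constant $1$ in the exponent), so that one may simply take $B=2$; your use of the generic two-sided bounds with a constant $C_1$ in the exponent is fine too, but note that your final sentence has the direction reversed: to get $e(2t,x,y)\le\mathrm{const}\cdot\kD(Bt,x,y)$ from a lower bound $\kD(s,x,y)\ge c_1(4\pi s)^{-m/2}\exp(-C_1 d(x,y)^2/(4s))$ you need $C_1/(4B)\le 1/8$, i.e.\ $B\ge 2C_1$, so $B$ must be taken \emph{large}, not small.
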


\begin{proof}
We choose a constant $C_1>0$ such that $\tau^\alpha \leq C_1 \cdot \exp(\tau^2)$ for all $\tau \in [0,\infty)$.
With $\tau = {d(x,y)}/{\sqrt{8t}}$ this yields
\begin{equation}
d(x,y)^\alpha 
\,\leq\,
C_1 \cdot (8t)^{\alpha/2} \cdot \exp\left(\frac{d(x,y)^2}{8t}\right) .
\label{est:dxygegent}
\end{equation}
Hence
\begin{align}
|q(t,x,y) - q'(t,x,y)|
&\leq
C \cdot e(t,x,y)\cdot d(x,y)^\alpha \cdot t^\beta \nonumber\\
&\stackrel{(\ref{est:dxygegent})}{\leq}
C_2 \cdot e(t,x,y)\cdot t^{\beta+\alpha/2} \cdot
\exp\left(\frac{d(x,y)^2}{8t}\right)  \nonumber\\
&=
C_3 \cdot e(2t,x,y)\cdot t^{\beta+\alpha/2} .
\label{eq:workhorse1}
\end{align}
The heat kernel of the Laplace-Beltrami operator satisfies the well-known bound 
\begin{equation}\label{eq:hsu}
\kD(t,x,y) \,\geq\, C_4 \cdot e(t,x,y)
\end{equation}
for all $(t,x,y) \in (0,1] \times M \times M$, see e.g.\ \cite[Cor.~5.3.5]{H}.
Inserting \eqref{eq:hsu} into \eqref{eq:workhorse1} yields
\[
|q(t,x,y) - q'(t,x,y)|
\,\leq\, 
C_5 \cdot t^{\beta+\alpha/2} \cdot \kD(2t,x,y)
\]
which proves the claim.
\end{proof}

The following proposition shows why heat bounds on kernels are important for path integrals.

\begin{prop}\label{prop:replace}
Let $M$ be an $m$-dimensional compact Riemannian manifold, let ${E \to M}$ be a Hermitian vector bundle over $M$.
Let $q$ and $q'$ be continuous time-dependent integral kernels in $E$.
Let $t>0$.
Let $Q_t, Q_t' : \PMtxy \to \Hom(E_x,E_y)$ be the corresponding measurable functions.

\vspace{-3mm}
Suppose that $q$ satisfies a heat bound and that $Q_t$ is path integrable.
If $q$ and $q'$ are heat-related, then $q'$ also satisfies a heat bound, $Q_t'$ is also path integrable and the path integrals coincide,
\[
\fint_\PMtxy Z(\PP,m,t)Q_t(\PP,\gamma)\, \D\gamma =
\fint_\PMtxy Z(\PP,m,t)Q_t'(\PP,\gamma)\, \D\gamma.
\]
\end{prop}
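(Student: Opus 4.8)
The plan is to reduce everything to a single quantitative estimate on the difference of the path-space integrands over a fixed partition, and then control the sum that arises when multiplying out the telescoping product. First I would observe that since $q$ satisfies a heat bound and $q-q'$ is controlled by $Ct^\beta\sum_j\kD(B_jt,x,y)$ with $\beta>1$, the triangle inequality immediately gives a heat bound for $q'$ as well (for $t$ small, $t^\beta\le t$), so the first two assertions are essentially free. The content is the equality of the path integrals, and for that it suffices to show that
\[
\int_{\PPMxy} Z(\PP,m,t)\,\bigl(Q_t(\PP,\gamma)-Q_t'(\PP,\gamma)\bigr)\,\Dg \longrightarrow 0
\]
as $|\PP|\to 0$ along the directed system of partitions, because then the two nets $\int Z Q_t$ and $\int Z Q_t'$ have the same limit (one of which exists by hypothesis).

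The key algebraic step is the telescoping identity: writing $q_j=q(t\delta_j,\gamma(s_j),\gamma(s_{j-1}))$ and $q_j'$ the analogous factor for $q'$ (with $\delta_j=s_j-s_{j-1}$), one has
\[
q_r\circ\cdots\circ q_1-q_r'\circ\cdots\circ q_1'
=\sum_{i=1}^r q_r\circ\cdots\circ q_{i+1}\circ(q_i-q_i')\circ q_{i-1}'\circ\cdots\circ q_1'.
\]
I would then integrate term by term over $\PPMxy=M^{\times(r-1)}$, bounding each factor pointwise: the factors $q_j$ and $q_j'$ by the heat bound ($\le \kD(t\delta_j,\cdot,\cdot)+Ct\delta_j\sum\kD(B t\delta_j,\cdot,\cdot)$, which for small mesh is $\le 2\kD(t\delta_j,\cdot,\cdot)$ up to harmless constants after expanding and using $\kD(Bt\delta_j,x,y)\le c\,\kD(t\delta_j,x,y)$ for $B\ge 1$, or a reindexing argument for $B<1$), and the one exceptional factor $q_i-q_i'$ by $C(t\delta_i)^\beta\sum_j\kD(B_j t\delta_i,\cdot,\cdot)$. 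The multiplicative renormalization constant $Z(\PP,m,t)$ exactly compensates the $(4\pi t\delta_j)^{-m/2}$ normalizations so that after performing the $z_1,\dots,z_{r-1}$ integrations via the semigroup-type convolution inequality $\int_M \kD(a,x,z)\kD(b,z,y)\,dz=\kD(a+b,x,y)$, each of the $r$ summands is bounded by a constant times $(t\delta_i)^{\beta-1}\cdot t\delta_i\cdot\kD(Bt,x,y)$-type expression — crucially carrying an extra factor $(t\delta_i)^{\beta-1}\le (t|\PP|)^{\beta-1}$ relative to the $i$-th interval length. Summing over $i$ and using $\sum_i\delta_i=1$ then yields a bound of the form $\mathrm{const}\cdot(t|\PP|)^{\beta-1}\to 0$ since $\beta>1$, uniformly in $(x,y)$ on the compact manifold.

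The main obstacle I anticipate is bookkeeping the constants so that they do not blow up with $r=\#\PP$: each of the $r$ terms in the telescoping sum involves a product of $r-1$ ``good'' heat-kernel factors, and if each contributed a multiplicative constant $c>1$ the total would be $c^{r}\to\infty$. The resolution is that the ``good'' bound for $q_j,q_j'$ must be arranged as $\kD(t\delta_j,x,y)$ times $(1+O(t\delta_j))$ so that the product of the $O$-corrections is $\prod(1+O(t\delta_j))\le\exp(O(t\sum\delta_j))=\exp(O(t))$, a constant independent of the partition; and the convolution identity for $\kD$ must be used in its exact form $\kD*\kD=\kD$ (no lossy constant) for the leading terms, with the lower-order terms absorbed into the $\exp(O(t))$ factor. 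One also has to handle the kernels $\kD(B_j t\delta,\cdot,\cdot)$ with $B_j\ne 1$ appearing both in the heat bound and in the heat-relatedness estimate; comparing $\kD(at,x,y)$ and $\kD(bt,x,y)$ for fixed ratios $a/b$ via the Gaussian two-sided bounds \eqref{eq:hsu} (and its upper counterpart) reduces all of these to a fixed finite collection of scales, at the cost of finitely many constants that do not depend on $\PP$. Once this uniform-in-$\PP$ control is in place, the limit statement follows and with it the equality of path integrals; the path integrability of $Q_t'$ then also follows since its net differs from the convergent net of $Q_t$ by a null net.
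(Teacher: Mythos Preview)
Your overall strategy matches the paper's---telescope, bound the good factors by the heat bound and the exceptional one by heat-relatedness, convolve via the semigroup property of $\kD$, and control the $r$-fold product by arranging it as $\prod_j(1+O(t\delta_j))\le e^{O(t)}$. But there is a genuine gap in how you handle the constants $B_j\ne 1$. You propose to collapse each good factor to the form $\kD(t\delta_j,\cdot,\cdot)\bigl(1+O(t\delta_j)\bigr)$ via an inequality $\kD(Bt\delta_j,x,y)\le c\,\kD(t\delta_j,x,y)$ with $c$ independent of the partition. No such $c$ exists: for $B>1$, fixed $x\ne y$, and $s=t\delta_j\to 0$, the Gaussian asymptotics give $\kD(Bs,x,y)/\kD(s,x,y)\sim B^{-m/2}\exp\!\bigl(d(x,y)^2(B-1)/(4Bs)\bigr)\to\infty$. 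The two-sided Gaussian bounds you invoke inherit the same blow-up, so you cannot reduce the factors to a common time scale \emph{before} convolving without losing uniformity in $\PP$.

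The paper avoids this by reversing the order: it never compares $\kD$ at different scales. It expands the product $\prod_{j\ne i}\bigl(\kD(t\delta_j)+Ct\delta_j\sum_{\ell}\kD(B_\ell t\delta_j)\bigr)$ completely, and for each term of the expansion uses the exact semigroup identity to perform the $(r-1)$-fold convolution. The result is $\kD(s,y,x)$ for some accumulated time $s\in[B_{\min}t,\,B_{\max}t]$ (with $B_{\min}=\min\{1,B_1,\dots,B_k\}$ and $B_{\max}=\max\{1,B_1,\dots,B_k\}$), and this is bounded by $\max_{s\in[B_{\min}t,\,B_{\max}t]}\kD(s,y,x)$, a finite constant independent of $\PP$. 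The remaining scalar coefficients then re-sum exactly as you anticipated, to $\prod_{j\ne i}(1+Ckt\delta_j)\cdot Ck(t\delta_i)^\beta\le e^{Ckt}\cdot Ckt\delta_i\cdot(t|\PP|)^{\beta-1}$, and summing over $i$ yields the factor $|\PP|^{\beta-1}\to 0$. (A minor point: the $Z(\PP,m,t)$ in your displayed difference is spurious---the measure on $\PMtxy$ is already $Z^{-1}\Dg$, so the $Z$'s cancel and there is nothing for $Z$ to ``compensate''.)
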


\begin{proof}
Let $q$ and $q'$ be heat-related.
It is clear from the definitions that $q'$ also satisfies a heat bound.
We put $B_\mathrm{min} := \min\{1,B_1,\ldots,B_k\}$ and $B_\mathrm{max} := \max\{1,B_1,\ldots,B_k\}$ for the constants $B_j$ occurring in Definitions~\ref{def:heatbound} and \ref{def:heatrelated}.

Let $\PP$ be a partition whose mesh $\mu$ is sufficiently small so that the estimates in Definitions~\ref{def:heatbound} and \ref{def:heatrelated} apply.
Using the semigroup property of $\kD$ we estimate
\begin{align*}
&\int_{\PPMxy} \left|Q_t(\PP,\gamma) - Q_t'(\PP,\gamma) \right| \Dg\\
&=
\int_{M^{\times(r-1)}} \Big|\sum_{j=1}^r q(t(s_r-s_{r-1}),y,z_{r-1}) \circ \cdots \circ q(t(s_{j+1}-s_{j}),z_{j+1},z_{j})\circ\\
&
\quad\quad\quad\circ (q-q')(t(s_j-s_{j-1}),z_j,z_{j-1}) \circ q'(t(s_{j-1}-s_{j-2}),z_{j-1},z_{j-2})\circ\cdots \\
&
\quad\quad\quad\cdots\circ q'(t(s_1-s_{0}),z_1,x)\Big|\,dz_1\cdots dz_{r-1} \\
&\leq
\int_{M^{\times(r-1)}}\sum_{j=1}^r |q(t(s_r-s_{r-1}),y,z_{r-1})| \circ \cdots \\
&
\quad\quad\quad\cdots\circ |(q-q')(t(s_j-s_{j-1}),z_j,z_{j-1})| \circ\cdots \circ |q'(t(s_1-s_{0}),z_1,x)|\,dz_1\cdots dz_{r-1} \\
&\leq
\sum_{j=1}^r\int_{M^{\times(r-1)}} \Big(\kD(t(s_r-s_{r-1}),y,z_{r-1})+Ct(s_r-s_{r-1})\sum_{i_r=1}^k\kD(B_{i_r}t(s_r-s_{r-1}),y,z_{r-1})\Big) \cdot \\
&
\quad\quad\quad\cdots \Big(Ct^\beta(s_j-s_{j-1})^\beta\sum_{i_j=1}^k\kD(B_{i_j}t(s_j-s_{j-1}),z_j,z_{j-1})\Big)\cdots  \\
&
\quad\quad\quad\cdot \Big(\kD(t(s_1-s_{0}),z_1,x)+Ct(s_1-s_{0})\sum_{i_1=1}^k\kD(B_{i_1}t(s_1-s_{0}),z_1,x)\Big)\,dz_1\cdots dz_{r-1} \\
&\leq
\max_{s\in[B_\mathrm{min}t,B_\mathrm{max}t]} \kD(s,y,x) \cdot\sum_{j=1}^r(1+Ckt(s_r-s_{r-1})) \cdots Ckt^\beta(s_j-s_{j-1})^\beta \cdots (1+Ckt(s_1-s_{0})) \\
&\leq
\max_{s\in[B_\mathrm{min}t,B_\mathrm{max}t]} \kD(s,y,x)\cdot t^{\beta-1}\cdot \mu^{\beta-1} \cdot\sum_{j=1}^re^{Ckt(s_r-s_{r-1})} \cdots Ckt(s_j-s_{j-1}) \cdots e^{Ckt(s_1-s_{0})} \\
&\leq
\max_{s\in[B_\mathrm{min}t,B_\mathrm{max}t]} \kD(s,y,x)\cdot t^{\beta-1}\cdot \mu^{\beta-1} \cdot e^{Ckt} \cdot\sum_{j=1}^r Ckt(s_j-s_{j-1}) \\
&=
\max_{s\in[B_\mathrm{min}t,B_\mathrm{max}t]} \kD(s,y,x)\cdot t^{\beta}\cdot \mu^{\beta-1} \cdot e^{Ckt} \cdot Ck 
\end{align*}
The only term in this upper bound that depends on the partition is the term $\mu^{\beta-1}$.
Since $\beta>1$ this shows that
\[
\int_\PPMxy \left| Q_t(\PP,\gamma) -  Q_t'(\PP,\gamma)\right| \Dg 
\longrightarrow 0
\]
as $\mu \to 0$.
In the direct limit defining the path integral the mesh of the partitions tends to zero.
Thus the proposition is proved.
\end{proof}

\section{The heat kernel}

\subsection{Generalized Laplacians}
Throughout this section let $M$ be a compact $m$-dimensional Riemannian manifold without boundary and let $E\to M$ be a Hermitian vector bundle.
Let $H$ be a formally self-adjoint generalized Laplace operator acting on sections of $E$.
Locally, $H$ can be written in the form 
\[
H = -\sum_{j,k=1}^m g^{jk}\frac{\partial^2}{\partial x^j \partial x^k} + \mbox{lower order terms}.
\]
Here $(g^{jk})$ denotes the inverse of the matrix $(g_{jk})$ describing the Riemannian metric in the local coordinates, $g_{jk}=\<\partial/\partial x^j,\partial/\partial x^k\>$.
We assume that $H$ has smooth coefficients.
Formal self-adjointness means that for all smooth sections $u$ and $v$ in $E$,
\[
(Hu,v) = (u,Hv)
\]
holds, where $(u,v) = \int_M \< u(x),v(x)\>\,dx$ is the corresponding
$L^2$-scalar product.
Here $dx$ denotes the volume measure induced by the Riemannian metric.
It is well-known that $H$ is essentially self-adjoint in the Hilbert space
$L^2(M,E)$ of square-integrable sections in $E$ when given the domain
$C^\infty(M,E)$ of smooth sections in $E$, see e.~g.\ \cite[Prop.~2.33,
p.~89]{BGV}. 
Moreover, one knows that $H$ can be written in the form 
\begin{equation}
H = \nabla^*\nabla + V
\label{eq:H}
\end{equation}
where $\nabla$ is a metric connection on $E$ and $V$ is a smooth section in
symmetric endomorphisms of $E$, compare \cite[Prop.~2.5, p.~67]{BGV}.
We call $\nabla$ the \emph{connection determined by $H$} and $V$ its
\emph{potential}. 

\begin{ex}
The simplest example for $H$ as described above is the \emph{Laplace-Beltrami
operator} $H=\Delta$ acting on functions.
Here $E$ is the trivial real line bundle, $\nabla = d$ the usual derivative
and $V=0$.
\end{ex}

\begin{ex}
More generally, let $E=\bigwedge^k T^*M$ be the bundle of $k$-forms.
Then we may take the \emph{Hodge Laplacian} $H=d\delta + \delta d$ acting on
$k$-forms.
Here $d$ denotes exterior differentiation and $\delta$ its formal adjoint.
The Weitzenb\"ock formula says that $H = \nabla^*\nabla + V$, where
$\nabla$ is the Levi-Civita connection and $V$ depends linearly on the
curvature tensor of $M$. 
For example, for $k=1$ we have $V=\Ric$, see e.~g.\ \cite[Ch.~1.I]{Be}.
\end{ex}

\begin{ex}
If $M$ is a spin manifold one can form the spinor bundle $E=\Sigma M$ and the
Dirac operator $D$ acting on sections in $E$.
Then $H=D^2 = \nabla^*\nabla + \frac14 \scal$ is a self-adjoint generalized
Laplace operator.

More generally, the square of any generalized Dirac operator in the sense of
Gromov and Lawson yields a self-adjoint generalized Laplacian, see e.~g.\
\cite[Sec.~1,2]{GL}.
\end{ex}

\subsection{The heat kernel}
By functional calculus the self-adjoint extension of $H$ generates a
strongly continuous semigroup $t\mapsto e^{-tH}$ in the Hilbert space
$L^2(M,E)$. 
For $u \in L^2(M,E)$ the section $U(t,x) := (e^{-tH}u)(x)$, $(t,x) \in
[0,\infty)\times M$, is the unique solution to the heat equation
\[
\frac{\partial U}{\partial t} + HU =0
\]
satisfying the initial condition $U(0,x)=u(x)$.

For $t>0$ the operator $e^{-tH}$ is smoothing and has an integral kernel $\kH$, i.e.,
\[
e^{-tH}u(x) = \int_M \kH(t,x,y)\,u(y)\,dy .
\]
This integral kernel $(t,x,y)\mapsto \kH(t,x,y)$ is smooth on $(0,\infty)\times
M\times M$. 
It is called the \emph{heat kernel} for $H$.

The aim of this section is to give a path integral formula for this heat kernel.
Since the heat kernel has the semigroup property we have the tautological path integral formula as in \eqref{eq:tautopath}:
\begin{equation}
\kH(t,y,x) = \fint_\PMxy Z(\PP,\dim(M),t)\,K^H_t(\PP,\gamma)\,\Dg .
\label{eq:tautoheat}
\end{equation}
To turn this into something useful we will replace the heat kernel appearing in the definition of $K^H_t$ in the RHS of \eqref{eq:tautoheat} by heat-related continuous time-depend integral kernels (not having the semigroup property).
We will repeatedly use Proposition~\ref{prop:replace} and Lemma~\ref{lem:gaussest}.
To get started we need

\begin{lemma}
Let $M$ be a compact Riemannian manifold without boundary and let $E\to M$ be a Hermitian vector bundle.
Then the heat kernel of any formally self-adjoint generalized Laplace operator $H$ satisfies a heat bound.
\end{lemma}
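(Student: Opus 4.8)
The plan is to exploit the decomposition $H=\nabla^*\nabla+V$ from \eqref{eq:H} and to dominate the semigroup $e^{-tH}$ by the scalar heat semigroup $e^{-t\Delta}$ via a Kato-type inequality. Since $M$ is compact and $V$ is a smooth field of self-adjoint endomorphisms of $E$, its fibrewise eigenvalues are bounded, so there is $c_0\geq 0$ with $V\geq -c_0\,\id$; then $H+c_0=\nabla^*\nabla+(V+c_0)$ with $V+c_0\geq 0$. As $\nabla$ is a metric connection, Kato's inequality $\bigl|\nabla|\phi|\bigr|\leq|\nabla\phi|$ holds for every section $\phi$, and together with $V+c_0\geq 0$ this makes the function $t\mapsto|\phi_t|$, $\phi_t:=e^{-t(H+c_0)}\phi$, a subsolution of the scalar heat equation; comparison (the Hess--Schrader--Uhlenbrock domination inequality) then yields $|\phi_t|\leq e^{-t\Delta}|\phi|$, i.e.
\[
\bigl|\bigl(e^{-tH}\phi\bigr)(x)\bigr|\,\leq\, e^{c_0 t}\,\bigl(e^{-t\Delta}|\phi|\bigr)(x)
\]
for all $t>0$, all $x\in M$ and all sections $\phi$.

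Next I would pass from semigroups to integral kernels. Using $\bigl(e^{-tH}\phi\bigr)(x)=\int_M\kH(t,x,y)\phi(y)\,dy$ and $\bigl(e^{-t\Delta}|\phi|\bigr)(x)=\int_M\kD(t,x,y)\,|\phi(y)|\,dy$, and applying the above to sections $\phi$ concentrating near an arbitrary point $y$ with prescribed unit value in $E_y$, the smoothness of both kernels for $t>0$ permits passing to the limit and gives
\[
|\kH(t,x,y)|\,\leq\, e^{c_0 t}\,\kD(t,x,y)
\]
for all $t>0$ and $x,y\in M$, where $|\kH(t,x,y)|$ is the operator norm in $\Hom(E_y,E_x)$.

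Finally, fix $T>0$. By convexity of $\tau\mapsto e^{c_0\tau}$ one has $e^{c_0 t}-1\leq\tfrac{t}{T}(e^{c_0 T}-1)$ for $t\in[0,T]$, so with $C:=\tfrac1T(e^{c_0 T}-1)$ (replaced by any positive constant if $c_0=0$) we obtain
\[
|\kH(t,x,y)|\,\leq\,\kD(t,x,y)+Ct\,\kD(t,x,y)\qquad(0<t\leq T,\ x,y\in M),
\]
which is a heat bound in the sense of Definition~\ref{def:heatbound} with $k=1$ and $B_1=1$. The main point to get right is the constant $1$ in front of the leading term $\kD(t,x,y)$: this is why one cannot merely quote a parametrix construction or a generic Gaussian estimate $|\kH(t,x,y)|\leq C_1\,e(Bt,x,y)$, since those produce an extraneous factor larger than $1$ multiplying $\kD(t,x,y)$ — note that the lower bound \eqref{eq:hsu} only holds with a constant $C_4<1$ in general — and such a factor cannot be absorbed into a correction of the form $Ct\,\kD(B_jt,x,y)$ as $t\to 0$. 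Thus the sharp domination constant is the heart of the argument, and the rest is routine bookkeeping.
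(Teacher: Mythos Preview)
Your proposal is correct and follows essentially the same route as the paper: decompose $H=\nabla^*\nabla+V$, use compactness to bound $V\geq -c_0$, invoke the Hess--Schrader--Uhlenbrock domination to obtain $|\kH(t,x,y)|\leq e^{c_0 t}\,\kD(t,x,y)$, and finish with the elementary estimate $e^{c_0 t}\leq 1+Ct$ for $t$ in a bounded interval. The only cosmetic differences are that the paper states the kernel inequality directly (citing \cite{HSU}) rather than deriving it from the semigroup estimate, and uses the cruder bound $e^{Ct}\leq 1+2Ct$ for sufficiently small $t$ in place of your convexity argument; your closing remark about the necessity of the sharp leading constant $1$ is a nice addition not made explicit in the paper.
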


\begin{proof}
Write the Laplace operator in the form $H=\nabla^*\nabla + V$.
Since $M$ is compact there exists a constant $C>0$ such that $V(x) \geq -C$ for all $x\in M$.
This means that all eigenvalues of the symmetric endomorphism $V(x)$ are bounded from below by $-C$.
By the \emph{Hess-Schrader-Uhlenbrock estimate}, see \cite[p.~32]{HSU}, we have
\[
|k^H(t,x,y)| \leq k^{\Delta-C}(t,x,y) = e^{Ct}\cdot\kD(t,x,y)
\]
for all $(t,x,y) \in (0,\infty) \times M \times M$.
For $t>0$ sufficiently small we have $e^{Ct} \leq 1 +2Ct$, which proves the heat bound.
\end{proof}

\subsection{First kernel modification}
For the first kernel modification we recall the heat kernel asymptotics.
Let $M\bowtie M := \{(x,y)\in M\,|\, x \mbox{ and } y \mbox{ are not cut-points}\}$.
Then $M\bowtie M$ is an open and dense subset of $M \times M$ containing the diagonal.
There are unique smooth sections $a_j$ of $E\boxtimes E$ over $M\bowtie M$ such that the formal heat kernel 
\[
e(t,x,y)\sum_{j=0}^\infty a_j(x,y)t^j
\]
formally solves the heat equation with respect to the $x$-variable,
\[
\left(\frac{\partial}{\partial t} + H_x\right) \left(e(t,x,y)\sum_{j=0}^\infty a_j(x,y)t^j\right) =0,
\]
and $a_0(x,x) = \Id_{E_x}$.
Here $e(t,x,y)$ is defined as in \eqref{eq:defe}.
For $N\in\N$ we get
\begin{equation}
\left(\frac{\partial}{\partial t} + H_x\right) \left(e(t,x,y)\sum_{j=0}^N a_j(x,y)t^j\right) 
=
e(t,x,y)\cdot H_xa_N(x,y) \cdot t^N
\label{eq:approx1}
\end{equation}
for $t\in(0,\infty)$ and $(x,y)\in M\bowtie M$.
See \cite[Thm.~2.26]{BGV} for details.
Pick $\eta>0$ such that $2\eta$ is smaller than the injectivity radius of $M$.
Choose a smooth cutoff function $\chi : \R \to \R$ such that 
\begin{itemize}
\item $\chi\equiv 1$ on $(-\infty,\eta]$
\item $\chi\equiv 0$ on $[2\eta,\infty)$
\item $0 \leq \chi \leq 1$ everywhere
\end{itemize}
We put 
\[
\kHN(t,x,y) := \chi(d(x,y))\cdot e(t,x,y)\cdot \sum_{j=0}^N a_j(x,y)t^j .
\]
Then $\kHN$ is smooth on all of $(0,\infty) \times M \times M$.
From \eqref{eq:approx1} we get
\[
\left(\frac{\partial}{\partial t} + H_x\right) \kHN(t,x,y)
=
e(t,x,y)\cdot \left(\chi(d(x,y))\cdot H_xa_N(x,y) \cdot t^N
+ b_N(t,x,y)\right)
\]
where the support of $b_N$ is contained in the region where the gradient of $\chi(d(x,y))$ does not vanish, i.e., in the region $(0,\infty) \times \{(x,y)\in M\times M\,|\,\eta\leq d(x,y) \leq 2\eta\}$.
Moreover, explicit computation shows 
\begin{equation}
b_N(t,x,y) = \OO(t^{-1}) \quad\mbox{ as }\quad t\searrow 0 
\label{eq:bNOO}
\end{equation}
uniformly in $x$ and $y$.
Duhamel's principle \cite[Prop.~7.9]{Roe} implies
\begin{align}
\kH&(t,x,y) - \kHN(t,x,y)\nonumber\\
&=
\int_0^t \int_M \kH(t-s,x,z)\left(e(s,z,y)\cdot \left(\chi(d(z,y))\cdot H_xa_N(z,y) \cdot s^N
+ b_N(s,z,y)\right)\right)dz\,ds
\label{eq:approx2}
\end{align}
Using the Hess-Schrader-Uhlenbrock inequality and \eqref{eq:hsu} we estimate for all $t\in(0,1]$ and $x,y\in M$
\begin{align}
\Big|\int_0^t \int_M \kH&(t-s,x,z)\cdot e(s,z,y)\cdot \chi(d(z,y))\cdot H_xa_N(z,y) \cdot s^N dz\,ds\Big|\nonumber\\
&\leq
\int_0^t \int_M \left|\kH(t-s,x,z)\right|\cdot  e(s,z,y) \cdot \chi(d(z,y))\cdot \left| H_xa_N(z,y)\right| \cdot s^N\, dz\,ds \nonumber\\
&\leq
C_1\int_0^t \int_M e^{C_2(t-s)}\cdot \kD(t-s,x,z)\cdot \kD(s,z,y)\cdot s^N\,dz\,ds\nonumber\\
&=
C_1\int_0^t e^{C_2(t-s)}\cdot \kD(t,x,y)\cdot s^N\,ds\nonumber\\
&\leq
C_3\cdot \kD(t,x,y) \cdot t^{N+1} .
\label{eq:approx3}
\end{align}

Using the Hess-Schrader-Uhlenbrock inequality, \eqref{eq:hsu}, \eqref{eq:bNOO}, and the fact that $b_N(s,z,y)$ vanishes whenever $d(z,y)\leq \eta$ we estimate

{\allowdisplaybreaks
\begin{align}
\Big|&\int_0^t \int_M \kH(t-s,x,z)\cdot e(s,z,y)\cdot b_N(s,z,y)\,dz\,ds\Big|\nonumber\\
&\leq
\int_0^t \int_M e^{C_2(t-s)}\cdot \kD(t-s,x,z)\cdot e(s,z,y)\cdot\left|b_N(s,z,y)\right|\,dz\,ds\nonumber\\
&\leq
C_4\cdot \int_0^t \int_M \kD(t-s,x,z)\cdot e(s,z,y)\cdot\left|b_N(s,z,y)\right|\,dz\,ds\nonumber\\
&=
C_4 \cdot \int_0^t \int_M \kD(t-s,x,z)\cdot e(t+s,z,y)\cdot\left(\frac{t+s}{s}\right)^{m/2}\cdot\exp\left(-\frac{d(z,y)^2t}{4(t+s)s}\right)\cdot\left|b_N(s,z,y)\right|\,dz\,ds\nonumber\\
&\leq
C_5 \cdot \int_0^t \int_M \kD(t-s,x,z)\cdot e(t+s,z,y)\cdot s^{-m/2}\cdot\exp\left(-\frac{d(z,y)^2}{8s}\right)\cdot\left|b_N(s,z,y)\right|\,dz\,ds\nonumber\\
&\leq
C_6 \cdot \int_0^t \int_M \kD(t-s,x,z)\cdot e(t+s,z,y)\cdot s^{-m/2-1}\cdot\exp\left(-\frac{\eta^2}{8s}\right)\,dz\,ds\nonumber\\
&\leq
C_7 \cdot \int_0^t \int_M \kD(t-s,x,z)\cdot e(t+s,z,y)\cdot s^N\,dz\,ds\nonumber\\
&\leq
C_8 \cdot \int_0^t \int_M \kD(t-s,x,z)\cdot \kD(t+s,z,y)\cdot s^N\,dz\,ds\nonumber\\
&=
C_8 \cdot \int_0^t \kD(2t,x,y)\cdot s^N\,ds\nonumber\\
&=
C_9\cdot \kD(2t,x,y) \cdot t^{N+1} .
\label{eq:approx4}
\end{align}
}
Inserting \eqref{eq:approx3} and \eqref{eq:approx4} into \eqref{eq:approx2} yields
\[
\left|\kH(t,x,y) - \kHN(t,x,y)\right|
\leq
C_{10}\cdot (\kD(t,x,y)+\kD(2t,x,y)) \cdot t^{N+1}.
\]
This shows that $\kH$ and $\kHN$ are heat-related if $N\geq1$.
We use this with $N=1$.
Putting 
\[
\ki(t,x,y) := \kH_{(1)}(t,x,y) =  \chi(d(x,y))\cdot e(t,x,y)\cdot (a_0(x,y)+a_1(x,y)t)
\]
we have shown
\begin{lemma}
Let $M$ be a compact Riemannian manifold without boundary, let $E\to M$ be a Hermitian vector bundle and let $H$ be a formally self-adjoint generalized Laplacian acting on sections of $E$.

\vspace{-4mm}
Then the heat kernel $\kH$ and the smooth time-dependent integral kernel $\ki$ are heat-related.
In particular, $\ki$ satisfies a heat bound, $K^1_t$ is path integrable and 
\[
\kH(t,y,x) = \fint_\PMtxy Z(\PP,\dim(M),t)\,K^1_t(\PP,\gamma)\,\Dg .\tag*{\qed}
\]
\end{lemma}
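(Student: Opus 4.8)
The plan is to assemble this lemma directly from results already established in the excerpt, so very little new work is required. The statement has four assertions: (1) $\kH$ and $\ki$ are heat-related; (2) $\ki$ satisfies a heat bound; (3) $K^1_t$ is path integrable; and (4) the path integral formula for $\kH(t,y,x)$ holds. Assertion (1) is exactly the estimate just derived in the preceding paragraph, namely $|\kH(t,x,y)-\kHN(t,x,y)|\leq C_{10}(\kD(t,x,y)+\kD(2t,x,y))t^{N+1}$ specialized to $N=1$, since $\ki=\kH_{(1)}$; with $\beta=2>1$, $B_1=1$, $B_2=2$ this is precisely Definition~\ref{def:heatrelated}. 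So I would simply record that $N=1$ gives $\beta=N+1=2>1$.

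For assertion (2), I would invoke the earlier lemma stating that the heat kernel $\kH$ of any formally self-adjoint generalized Laplacian satisfies a heat bound, together with the observation (made in the proof of Proposition~\ref{prop:replace}) that a kernel heat-related to one satisfying a heat bound again satisfies a heat bound. Concretely: $|\ki|\leq|\kH|+|\kH-\ki|\leq \kD(t,x,y)+Ct\kD(Bt,x,y)+C't^2(\kD(t,x,y)+\kD(2t,x,y))$ on $(0,T]$, and for $t$ small enough $C't^2\leq Ct$, so this fits the form required by Definition~\ref{def:heatbound}. Assertion (3), path integrability of $K^1_t$, and assertion (4), the formula $\kH(t,y,x)=\fint_\PMtxy Z(\PP,\dim(M),t)K^1_t(\PP,\gamma)\Dg$, then both follow from Proposition~\ref{prop:replace} applied with $q=\kH$ and $q'=\ki$: by \eqref{eq:tautoheat} (the tautological path integral formula, valid because $\kH$ has the semigroup property) the kernel $\kH$ satisfies a heat bound and $K^H_t$ is path integrable with path integral $\kH(t,y,x)$; since $\kH$ and $\ki$ are heat-related, Proposition~\ref{prop:replace} yields that $K^1_t$ is path integrable and that its path integral equals that of $K^H_t$, which is $\kH(t,y,x)$.

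There is essentially no obstacle here — the lemma is a bookkeeping corollary of the work already done. The one small point that needs care is the very last equality: the tautological formula \eqref{eq:tautoheat} is stated for the measure space family $\PMxy$ with renormalization constant $Z(\PP,\dim(M),t)$ inserted explicitly, matching exactly the normalization built into $\PMtxy$, so one must note that $\fint_\PMxy Z(\PP,\dim(M),t)K^H_t(\PP,\gamma)\Dg$ and $\fint_\PMtxy Z(\PP,m,t)K^H_t(\PP,\gamma)\Dg$ denote the same quantity (the factor $Z$ times the measure $Z^{-1}\Dg$ is just $\Dg$). Once that identification is made, Proposition~\ref{prop:replace} applies verbatim and the proof is complete. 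I would therefore write the proof in three short sentences invoking, in order, the heat-relatedness estimate with $N=1$, the heat-bound lemma for $\kH$, and Proposition~\ref{prop:replace} together with \eqref{eq:tautoheat}.
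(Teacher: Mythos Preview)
Your proposal is correct and follows exactly the paper's approach: the lemma is stated with a \qed and no separate proof because all the work was done in the preceding paragraphs, and your summary---specialize the estimate $|\kH-\kHN|\le C_{10}(\kD(t,x,y)+\kD(2t,x,y))t^{N+1}$ to $N=1$, invoke the earlier lemma that $\kH$ satisfies a heat bound, then apply Proposition~\ref{prop:replace} together with the tautological formula \eqref{eq:tautoheat}---is precisely that argument. Your observation about the normalization in $\PMxy$ versus $\PMtxy$ is a valid clarification of a minor notational slip in the paper.
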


\subsection{Second kernel modification}
If we put 
\[
a(x,y) := a_0(x,y)^{-1} \circ a_1(x,y) \in \Hom(E_y,E_y)
\]
then the integral kernel $\ki$ can written as
\[
\ki(t,x,y) = \chi(d(x,y))\cdot e(t,x,y)\cdot a_0(x,y)\circ (\id + t a(x,y)).
\]
We set 
\[
\kii(t,x,y) := \chi(d(x,y))\cdot e(t,x,y)\cdot a_0(x,y)\circ \exp(t a(x,y)).
\]
Since $\exp(t a(x,y)) - (\id + t a(x,y)) = \OO(t^2)$ uniformly in $x$ and $y$ with $d(x,y)\leq 2\eta$ we have
\begin{lemma}
Let $M$ be a compact Riemannian manifold without boundary, let $E\to M$ be a Hermitian vector bundle and let $H$ be a formally self-adjoint generalized Laplacian acting on sections of $E$.

\vspace{-4mm}
Then the smooth time-dependent integral kernels $\ki$ and $\kii$ are heat-related.
In particular, $\kii$ satisfies a heat bound, $K^2_t$ is path integrable and 
\[
\kH(t,y,x) = \fint_\PMtxy Z(\PP,\dim(M),t)\,K^2_t(\PP,\gamma)\,\Dg .
\tag*{\qed}
\]
\end{lemma}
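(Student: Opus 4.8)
The plan is to reduce this lemma to an application of Lemma~\ref{lem:gaussest} followed by Proposition~\ref{prop:replace}, exactly in the pattern used for the previous two kernel modifications. We already know $\ki$ and $\kii$ are continuous time-dependent integral kernels in $E$, that $\ki$ satisfies a heat bound, and that $K^1_t$ is path integrable. Hence, once we show that $\ki$ and $\kii$ are heat-related, Proposition~\ref{prop:replace} immediately gives that $\kii$ satisfies a heat bound, that $K^2_t$ is path integrable, and that
\[
\fint_\PMtxy Z(\PP,\dim(M),t)\,K^1_t(\PP,\gamma)\,\Dg = \fint_\PMtxy Z(\PP,\dim(M),t)\,K^2_t(\PP,\gamma)\,\Dg,
\]
which combined with the conclusion of the previous lemma yields the displayed formula for $\kH(t,y,x)$. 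So the whole content of the proof is the heat-relatedness of $\ki$ and $\kii$.

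To get this, I would verify the hypothesis of Lemma~\ref{lem:gaussest}: find $C,\alpha,\beta\ge 0$ with $\beta+\alpha/2>1$ and $T>0$ so that $|\ki(t,x,y)-\kii(t,x,y)| \le C\cdot e(t,x,y)\cdot d(x,y)^\alpha\cdot t^\beta$ for all $(t,x,y)\in(0,T]\times M\times M$. Writing both kernels in the factored form used just above the lemma statement,
\[
\ki(t,x,y)-\kii(t,x,y) = \chi(d(x,y))\cdot e(t,x,y)\cdot a_0(x,y)\circ\bigl((\id+t\,a(x,y)) - \exp(t\,a(x,y))\bigr),
\]
we are reduced to estimating the endomorphism-valued remainder $(\id+t\,a(x,y)) - \exp(t\,a(x,y))$. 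By Taylor's theorem for the exponential, this equals $-\tfrac12 t^2 a(x,y)^2 + O(t^3)$, and more precisely its operator norm is bounded by $t^2\|a(x,y)\|^2 e^{t\|a(x,y)\|}$. Now $\chi(d(x,y))$ is supported in $\{d(x,y)\le 2\eta\}$, a relatively compact neighbourhood of the diagonal on which $a_0$ and $a_1$ (hence $a = a_0^{-1}\circ a_1$) are smooth; since $M$ is compact, $\|a(x,y)\|$ and $\|a_0(x,y)\|$ are bounded by a constant $C'$ on the support of $\chi$. Therefore, for $t\in(0,T]$ with $T$ fixed,
\[
|\ki(t,x,y)-\kii(t,x,y)| \le \chi(d(x,y))\cdot e(t,x,y)\cdot C' \cdot (C')^2 e^{TC'} t^2 \le C\cdot e(t,x,y)\cdot d(x,y)^0\cdot t^2,
\]
so Lemma~\ref{lem:gaussest} applies with $\alpha=0$ and $\beta=2$, and indeed $\beta+\alpha/2 = 2 > 1$.

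There is essentially no hard step here; the only point requiring a moment's care is the uniform control of the Taylor remainder $\exp(ta)-(\id+ta)$ over $x,y$ — but this is exactly the statement $\exp(t a(x,y))-(\id+t a(x,y))=\OO(t^2)$ uniformly in $x,y$ with $d(x,y)\le 2\eta$ that is asserted just before the lemma, and it follows at once from the compactness of $M$ together with smoothness of $a_0$ and $a_1$ on the neighbourhood $M\bowtie M$ of the diagonal (so that $a=a_0^{-1}\circ a_1$ is bounded on the support of $\chi$). Everything else is bookkeeping: quoting Lemma~\ref{lem:gaussest} to conclude heat-relatedness, then quoting Proposition~\ref{prop:replace} to transport the heat bound, path integrability, and the value of the path integral from $\ki$ to $\kii$.
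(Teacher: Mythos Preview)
Your proof is correct and follows exactly the paper's approach: the paper's entire argument is the observation that $\exp(t\,a(x,y)) - (\id + t\,a(x,y)) = \OO(t^2)$ uniformly on the support of $\chi$, which you spell out in detail and then feed into Lemma~\ref{lem:gaussest} (with $\alpha=0$, $\beta=2$) and Proposition~\ref{prop:replace}. There is no difference in strategy, only in the level of detail.
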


\subsection{Third kernel modification}
For a piecewise smooth curve $(\PP,\gamma)$ and $s,t \in [0,1]$ and a connection $\nabla$ on $E$ let $\tau(\gamma,\nabla)_s^t : E_{\gamma(s)} \to E_{\gamma(t)}$ denote the \emph{parallel transport} along $\gamma$ with respect to $\nabla$.
We have 
\begin{equation}
\tau(\gamma,\nabla)_t^u \circ \tau(\gamma,\nabla)_s^t = \tau(\gamma,\nabla)_s^u
\quad\mbox{ and }\quad
\tau(\gamma,\nabla)_t^s = (\tau(\gamma,\nabla)_s^t)^{-1}.
\label{eq:paralleltransport} 
\end{equation}
We will use the metric connection $\nabla$ corresponding to a generalized Laplacian as in \eqref{eq:H}.
Then $\tau(\gamma,\nabla)_s^t$ is a linear isometry.

For $x$ and $y$ with $d(x,y) \leq 2\eta$ we define
\[
\kiii(t,x,y) := \chi(d(x,y))\cdot e(t,x,y)\cdot a_0(x,y)\circ \exp\Big(t\cdot\int_0^1 \tau(\gamma,\nabla)_s^1 \circ a(\gamma(s),\gamma(s))\circ \tau(\gamma,\nabla)_1^s\,ds\Big).
\]
Here $\gamma:[0,1]\to M$ denotes the shortest geodesic with $\gamma(0)=x$ and $\gamma(1)=y$.
This shortest geodesic is unique because $d(x,y)$ is smaller than the injecitivity radius of $M$.
For $d(x,y) > 2\eta$ set $\kiii(t,x,y) := 0$.

A proof similar to the one of \cite[Lemma~4.6]{BP} shows
\[
\left|\kii(t,x,y) - \kiii(t,x,y)\right|
\leq
C \cdot e(t,x,y) \cdot d(x,y) \cdot t.
\]
Hence Lemma~\ref{lem:gaussest} says that $\kii$ and $\kiii$ heat-related.
Proposition~\ref{prop:replace} applies and yields

\begin{lemma}
Let $M$ be a compact Riemannian manifold without boundary, let $E\to M$ be a Hermitian vector bundle and let $H$ be a formally self-adjoint generalized Laplacian acting on sections of $E$.

\vspace{-3mm}
Then the smooth time-dependent integral kernels $\kii$ and $\kiii$ are heat-related.
In particular, $\kiii$ satisfies a heat bound, $K^3_t$ is path integrable and 
\[
\kH(t,y,x) = \fint_\PMtxy Z(\PP,\dim(M),t)\,K^3_t(\PP,\gamma)\,\Dg .
\tag*{\qed}
\]
\end{lemma}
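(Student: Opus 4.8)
The plan is to obtain the lemma from a single pointwise comparison,
\[
\left|\kii(t,x,y)-\kiii(t,x,y)\right| \,\leq\, C\cdot e(t,x,y)\cdot d(x,y)\cdot t
\]
for all $x,y\in M$ and all $t\in(0,T]$. Granting this, Lemma~\ref{lem:gaussest} applies with $\alpha=1$ and $\beta=1$ (so that $\beta+\alpha/2=\tfrac32>1$) and shows that $\kii$ and $\kiii$ are heat-related. Since the previous lemma established that $\kii$ satisfies a heat bound and that $K^2_t$ is path integrable, Proposition~\ref{prop:replace} --- applied with $q=\kii$, $q'=\kiii$, $Q_t=K^2_t$ and $Q_t'=K^3_t$ --- then yields at once that $\kiii$ satisfies a heat bound, that $K^3_t$ is path integrable, and that the path integrals of $Z(\PP,\dim(M),t)\,K^2_t$ and of $Z(\PP,\dim(M),t)\,K^3_t$ over $\PMtxy$ agree; combining this with the corresponding identity for $K^2_t$ from the previous lemma gives the path integral formula for $\kH$. (One should also note in passing that $\kiii$ is indeed a smooth --- in particular continuous --- time-dependent integral kernel: its two branches glue smoothly because the cutoff $\chi(d(x,y))$ vanishes to infinite order at $d(x,y)=2\eta$.)

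So the real content is the pointwise estimate, and this is where I would do the work. For $d(x,y)>2\eta$ both kernels vanish, so I may assume $d(x,y)\le 2\eta$ and let $\gamma:[0,1]\to M$ be the unique shortest geodesic from $x$ to $y$, whose length $d(x,y)$ lies below the injectivity radius. Setting $A(x,y):=\int_0^1\tau(\gamma,\nabla)_s^1\circ a(\gamma(s),\gamma(s))\circ\tau(\gamma,\nabla)_1^s\,ds\in\End(E_y)$, the difference factors as
\[
\kii(t,x,y)-\kiii(t,x,y)=\chi(d(x,y))\cdot e(t,x,y)\cdot a_0(x,y)\circ\bigl(\exp(t\,a(x,y))-\exp(t\,A(x,y))\bigr),
\]
so it suffices to bound the last factor. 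First I would show $|a(x,y)-A(x,y)|\le C\,d(x,y)$: in a parallel orthonormal frame of $E$ along $\gamma$ the curve $s\mapsto\tau(\gamma,\nabla)_s^1\circ a(\gamma(s),\gamma(s))\circ\tau(\gamma,\nabla)_1^s$ is just the matrix of $a(\gamma(s),\gamma(s))=a_1(\gamma(s),\gamma(s))$ in that frame, which is Lipschitz in $s$ with constant $\OO(d(x,y))$ since $|\dot\gamma|\equiv d(x,y)$, and it equals $a(y,y)$ at $s=1$, whence $|A(x,y)-a(y,y)|\le C\,d(x,y)$; likewise $s\mapsto a(\gamma(s),y)$ is a smooth $\End(E_y)$-valued curve passing through $a(y,y)$ at $s=1$ and moving at speed $\OO(d(x,y))$, so $|a(x,y)-a(y,y)|\le C\,d(x,y)$; adding the two bounds gives the claim. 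Next, on $\{d(x,y)\le 2\eta\}$ both $a(x,y)$ (using that $a_0$ is invertible there, with $a_0^{-1}$ continuous, hence bounded, on this compact set) and $A(x,y)$ (an average of isometric conjugates of the bounded kernel $a_1$) are uniformly bounded, so for $t\le T$ the endomorphisms $t\,a(x,y)$ and $t\,A(x,y)$ stay in a fixed bounded subset of $\End(E_y)$ on which $\exp$ is Lipschitz; hence $|\exp(t\,a(x,y))-\exp(t\,A(x,y))|\le C\,t\,|a(x,y)-A(x,y)|\le C\,t\,d(x,y)$, and multiplying by the uniformly bounded factors $\chi(d(x,y))$, $e(t,x,y)$ and $a_0(x,y)$ completes the estimate.

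The hard part is not any single computation but making the two Lipschitz estimates rigorous for sections of $\End(E)$ over varying fibres; I would handle this by carrying out the whole argument in a synchronous (radial parallel) trivialization of $E$ around a fixed point, in which $a_0$, $a_1$, $a$ and the parallel transports $\tau(\gamma,\nabla)_s^t$ all become honest smooth matrix-valued functions of the base points with derivative bounds that are uniform on a neighbourhood of the diagonal in $M\bowtie M$. This is precisely the bookkeeping carried out in \cite[Lemma~4.6]{BP}, and that argument adapts here with the extra left factor $a_0(x,y)$ simply carried along. Everything else reduces to Lemma~\ref{lem:gaussest}, Proposition~\ref{prop:replace} and the previous lemma.
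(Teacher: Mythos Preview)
Your proposal is correct and follows essentially the same approach as the paper: establish the pointwise bound $|\kii-\kiii|\le C\cdot e(t,x,y)\cdot d(x,y)\cdot t$, then invoke Lemma~\ref{lem:gaussest} with $\alpha=\beta=1$ and Proposition~\ref{prop:replace}. In fact you give more detail than the paper itself, which simply states the pointwise estimate and refers to \cite[Lemma~4.6]{BP} for its proof; your sketch of that estimate via the Lipschitz bound $|a(x,y)-A(x,y)|\le C\,d(x,y)$ and the Lipschitz property of $\exp$ on bounded sets is exactly the right unpacking of that reference.
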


The advantage of $\kiii$ over $\kii$ consists of the fact that we need to evaluate $a_1$ only along the diagonal.
It is well-known that 
\[
a(x,x) = a_0(x,x)^{-1}\circ a_1(x,x) = a_1(x,x) = \frac16 \scal(x)\cdot \id_{E_x} - V(x)
\]
where $\scal$ denotes the scalar curvature of $M$ and $V$ is the potential of $H$; compare \cite[p.~103ff]{Roe}.
Hence $\kiii$ is given by
\begin{align*}
\kiii(t,x,y)
=&
\chi(d(x,y))\cdot e(t,x,y)\cdot a_0(x,y)\\
&
\circ \exp\left(t\cdot\int_0^1\Big(\frac16 \scal(\gamma(s))\cdot \id_{E_{y}} - \tau(\gamma,\nabla)_s^1 \circ V(\gamma(s))\circ \tau(\gamma,\nabla)_1^s\Big)\,ds\right).
\end{align*}

\subsection{Fourth kernel modification}
We can now replace $a_0(x,y)$ by another scalar curvature term.
The same estimates as in \cite[Section~4.5]{BP} show that $\kiii$ and $\kiv$ are heat-related, where
\begin{align*}
\kiv(t,x,y)
:=&
\chi(d(x,y))\cdot e(t,x,y)\cdot \tau(\gamma,\nabla)_1^0\\
&
\circ \exp\left(t\cdot\int_0^1\Big(\frac13 \scal(\gamma(s))\cdot \id_{E_{y}} - \tau(\gamma,\nabla)_s^1 \circ V(\gamma(s))\circ \tau(\gamma,\nabla)_1^s\Big)\,ds\right).
\end{align*}

\begin{lemma}\label{lem:lastmod}
Let $M$ be a compact Riemannian manifold without boundary, let $E\to M$ be a Hermitian vector bundle and let $H$ be a formally self-adjoint generalized Laplacian acting on sections of $E$.

\vspace{-3mm}
Then the smooth time-dependent integral kernels $\kiii$ and $\kiv$ are heat-related.
In particular, $\kiv$ satisfies a heat bound, $K^4_t$ is path integrable and 
\[
\kH(t,y,x) = \fint_\PMtxy Z(\PP,\dim(M),t)\,K^4_t(\PP,\gamma)\,\Dg .
\tag*{\qed}
\]
\end{lemma}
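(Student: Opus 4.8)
First I would note that the three consequences in the statement are formal corollaries of heat-relatedness of $\kiii$ and $\kiv$. Indeed, by the previous lemma $\kiii$ satisfies a heat bound, $K^3_t$ is path integrable, and $\kH(t,y,x)=\fint_\PMtxy Z(\PP,\dim(M),t)\,K^3_t(\PP,\gamma)\,\Dg$; once $\kiii$ and $\kiv$ are known to be heat-related, Proposition~\ref{prop:replace} applied with $q=\kiii$, $q'=\kiv$ shows that $\kiv$ also satisfies a heat bound, that $K^4_t$ is path integrable, and that $\fint_\PMtxy Z(\PP,\dim(M),t)\,K^3_t(\PP,\gamma)\,\Dg=\fint_\PMtxy Z(\PP,\dim(M),t)\,K^4_t(\PP,\gamma)\,\Dg$, which combined with the formula for $K^3_t$ gives the asserted formula for $K^4_t$. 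So the entire content is the heat-relatedness of $\kiii$ and $\kiv$, and I would establish it by verifying the hypothesis of Lemma~\ref{lem:gaussest}.

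Concretely, the plan is to exhibit $C,\alpha,\beta\ge 0$ with $\beta+\alpha/2>1$ and $T>0$ such that $|\kiii(t,x,y)-\kiv(t,x,y)|\le C\,e(t,x,y)\,d(x,y)^{\alpha}\,t^{\beta}$ on $(0,T]\times M\times M$. Since both kernels vanish for $d(x,y)\ge 2\eta$, one works where the minimizing geodesic $\gamma$ from $x$ to $y$ is unique and all objects are smooth. There $\kiii$ and $\kiv$ differ in only two respects: the leading factor $a_0(x,y)$ is replaced by $\tau(\gamma,\nabla)_1^0$, and the coefficient $\tfrac16$ of $\scal$ in the exponent is replaced by $\tfrac13$. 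The difference of the two exponents is $\tfrac16\big(\int_0^1\scal(\gamma(s))\,ds\big)\id_{E_y}$, which is a scalar and hence central, so the exponential occurring in $\kiv$ equals $\exp\!\big(\tfrac16\,t\int_0^1\scal(\gamma(s))\,ds\big)$ times the one occurring in $\kiii$. Pulling $\tau(\gamma,\nabla)_1^0$ to the left and writing $a_0(x,y)=\tau(\gamma,\nabla)_1^0\circ\tilde a_0(x,y)$ with $\tilde a_0(x,y)\in\End(E_y)$ and $\tilde a_0(x,x)=\id_{E_y}$, the problem reduces to estimating
\[
\chi(d(x,y))\cdot e(t,x,y)\cdot\big(\tilde a_0(x,y)-\exp\big(\tfrac16\,t{\textstyle\int_0^1}\scal(\gamma(s))\,ds\big)\big)
\]
against $e(t,x,y)\,d(x,y)^{\alpha}\,t^{\beta}$, up to a uniformly bounded factor.

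The input here is the standard near-diagonal asymptotics of the zeroth heat coefficient, $a_0(x,y)=\tau(\gamma,\nabla)_1^0\circ j(x,y)^{-1/2}$ with $j$ the volume density of $\exp_x$ at $y$, so that $\tilde a_0(x,y)=j(x,y)^{-1/2}\id_{E_y}=\id_{E_y}+\OO(d(x,y)^2)$, together with the value $a_1(x,x)=\tfrac16\scal(x)\id_{E_x}-V(x)$ already used for $\kiii$. After multiplying by the Gaussian factor $e(t,x,y)$, which converts each power of $d(x,y)$ into a half power of $t$ at the price of passing from $e(t,\cdot,\cdot)$ to $e(2t,\cdot,\cdot)$ (as in the proof of Lemma~\ref{lem:gaussest}), the constant $\tfrac13$ is exactly the one for which the leading terms coming from $\tilde a_0(x,y)-\id_{E_y}$ and from $\exp\!\big(\tfrac16 t\int_0^1\scal(\gamma(s))\,ds\big)$ match, leaving a remainder that obeys the bound required by Lemma~\ref{lem:gaussest}. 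This is the computation carried out in \cite[Section~4.5]{BP}; the surrounding machinery (Lemma~\ref{lem:gaussest} and Proposition~\ref{prop:replace}) merely streamlines the bookkeeping compared with that reference.

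The one genuinely nontrivial step is this last expansion — controlling $a_0(x,y)$, equivalently the van Vleck density along $\gamma$, to sufficient order and checking that $\tfrac13$ rather than any other constant is forced. This is the real geometric input, the place at which the scalar curvature of $M$ enters, and it is the analogue within this chain of modifications of the curvature identity responsible for the $\tfrac13\scal$ appearing in Example~\ref{ex:qQ}. Everything else is a repetition of the arguments already used for $\kH\to\ki\to\kii\to\kiii$.
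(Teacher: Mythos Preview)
Your overall strategy---deducing the three stated consequences from heat-relatedness via Proposition~\ref{prop:replace} and then invoking \cite[Section~4.5]{BP} for the estimates---is exactly the paper's; it offers no argument beyond that citation. But the sketch you add of \emph{how} Lemma~\ref{lem:gaussest} would apply has a real gap.

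You say that once one ``converts each power of $d(x,y)$ into a half power of $t$'', the leading term of $\tilde a_0(x,y)-\id_{E_y}$ matches that of $\exp\big(\tfrac16\, t\int_0^1\scal(\gamma(s))\,ds\big)-1$. That conversion is an inequality ($e(t,x,y)\,d^\alpha\le C\,t^{\alpha/2}e(2t,x,y)$), not an identity; it cannot manufacture cancellation between two distinct terms. Pointwise the two leading contributions are $\tfrac{1}{12}\Ric_x(v,v)$ with $v=\exp_x^{-1}y$ (direction-dependent) and $\tfrac16\, t\,\scal(x)$ (direction-independent); each sits exactly on the borderline $\beta+\alpha/2=1$ of Lemma~\ref{lem:gaussest}, and they do not cancel. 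On the diagonal this is already decisive:
\[
\kiii(t,x,x)-\kiv(t,x,x)=e(t,x,x)\Big[\exp\big(t(\tfrac16\scal(x)-V(x))\big)-\exp\big(t(\tfrac13\scal(x)-V(x))\big)\Big]\sim -\tfrac{t}{6}\scal(x)\,e(t,x,x),
\]
which for $\scal(x)\neq 0$ cannot be bounded by $Ct^\beta\kD(Bt,x,x)$ with any $\beta>1$. So a single application of Lemma~\ref{lem:gaussest} is not available here; the exchange of $\Ric(v,v)$ for $t\,\scal$ is an \emph{integrated} effect (it emerges only after averaging the direction $v$ against a Gaussian), and one has to read \cite[Section~4.5]{BP} at that level---either by inserting further intermediate kernels or by estimating $\int_{\PPMxy}|K^3_t-K^4_t|\,\Dg$ directly in the style of the proof of Proposition~\ref{prop:replace}---rather than as a pointwise bound feeding Lemma~\ref{lem:gaussest}.
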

We can rewrite $\kiv$ in the form
\begin{align*}
\kiv(t,x,y)
=&
\chi(d(x,y))\cdot e(t,x,y)\cdot \exp\left(\frac{t}{3}\cdot\int_0^1 \scal(\gamma(s))\,ds\right)\cdot\tau(\gamma,\nabla)_1^0\\
&
\circ \exp\left(-t\cdot\int_0^1\left(\tau(\gamma,\nabla)_s^1 \circ V(\gamma(s))\circ \tau(\gamma,\nabla)_1^s\right)\,ds\right).
\end{align*}

\subsection{Path integral formula for the heat kernel}
We now come to the main result of this section.

\begin{definition}
Let $W$ be a continuous section of the endomorphism bundle $\Hom(E,E) = E\otimes E^* \to M$.
Let $\nabla$ be a connection on $E$.
For any piecewise smooth curve $(\PP,\gamma)$ in $M$ with $\PP=(0=s_0 < s_1 < \cdots < s_r=1)$ we define the \emph{$(\PP,\gamma)$-ordered exponential} by
\begin{align*}
\Pexp\Bigg(\int_{(\PP,\gamma)}&W\Bigg) \\
:=&
\prod_{j=1}^r \tau(\gamma,\nabla)_{s_{j-1}}^{s_j} \circ \exp\left(\int_{s_{j-1}}^{s_j}\left(\tau(\gamma,\nabla)_s^{s_{j-1}} \circ W(\gamma(s))\circ \tau(\gamma,\nabla)_{s_{j-1}}^s\right)\,ds\right)\\
=\,&
\tau(\gamma,\nabla)_{s_{r-1}}^{s_r} \circ \exp\left(\int_{s_{r-1}}^{s_r}\left(\tau(\gamma,\nabla)_s^{s_{r-1}} \circ W(\gamma(s))\circ \tau(\gamma,\nabla)_{s_{r-1}}^s\right)\,ds\right)
\circ\cdots\\
&
\cdots \circ \tau(\gamma,\nabla)_{s_{0}}^{s_1} \circ
\exp\left(\int_{s_{0}}^{s_1}\left(\tau(\gamma,\nabla)_s^{s_0} \circ W(\gamma(s))\circ \tau(\gamma,\nabla)_{s_0}^s\right)\,ds\right)\\
=\,&
\tau(\gamma,\nabla)_{0}^{1} \circ \exp\left(\int_{s_{r-1}}^{s_r}\left(\tau(\gamma,\nabla)_s^{0} \circ W(\gamma(s))\circ \tau(\gamma,\nabla)_{0}^s\right)\,ds\right)
\circ\cdots\\
&
\cdots \circ 
\exp\left(\int_{s_{0}}^{s_1}\left(\tau(\gamma,\nabla)_s^{0} \circ W(\gamma(s))\circ \tau(\gamma,\nabla)_{0}^s\right)\,ds\right)
\end{align*}
\end{definition}
where the last equation follows from \eqref{eq:paralleltransport}.
Note that $\Pexp\left(\int_{(\PP,\gamma)}W\right)\in\Hom(E_{\gamma(0)},E_{\gamma(1)})$.
If all $\tau(\gamma,\nabla)_s^1 \circ W(\gamma(s))\circ \tau(\gamma,\nabla)_1^s$ commute with each other, then 
\[
\Pexp\left(\int_{(\PP,\gamma)}W\right)
=
\tau(\gamma,\nabla)_{0}^{1} \circ \exp\left(\int_0^1\left(\tau(\gamma,\nabla)_s^0 \circ W(\gamma(s))\circ \tau(\gamma,\nabla)_0^s\right)\,ds\right).
\]
This is the case e.g.\ if $W$ is scalar, i.e., $W(x)=w(x)\cdot \id_{E_x}$ with $w(x)\in\R$.
Otherwise, $\Pexp\left(\int_{(\PP,\gamma)}W\right)$ depends on the subdivision $\PP$.

\begin{thm}\label{thm:heatkernel}
Let $M$ be a compact Riemannian manifold without boundary, let $E\to M$ be a Hermitian vector bundle and let $H$ be a formally self-adjoint generalized Laplacian acting on sections of $E$.
Let $\nabla$ be the connection determined by $H$ and $V$ its potential.

\vspace{-3mm}
Then the heat kernel of $H$ can be written as a path integral as follows:
\begin{align*}
\kH(&t,y,x) \\
&= \fint_\PMtxy \Xi(\PP,\gamma)  \cdot \exp\left(-\frac{\E[\gamma]}{2t} + \frac{t}{3}\int_0^1 \scal(\gamma(s))\,ds\right) \cdot \Pexp\left(\int_{(\PP,\gamma)}-tV\right) \,\Dg .\nonumber\\
\end{align*}
\end{thm}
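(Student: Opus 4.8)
The plan is to read the formula off from Lemma~\ref{lem:lastmod}, which already identifies the heat kernel with the path integral $\fint_\PMtxy Z(\PP,m,t)\,K^4_t(\PP,\gamma)\,\Dg$, and to rewrite the integrand into the shape claimed in the theorem. Because a renormalized integral is by definition the directed limit of the integrals over the approximating measure spaces, it is enough to establish the \emph{pointwise} identity
\[
Z(\PP,m,t)\,K^4_t(\PP,\gamma)
=
\Xi(\PP,\gamma)\,\exp\!\left(-\frac{\E[\gamma]}{2t}+\frac{t}{3}\int_0^1\scal(\gamma(s))\,ds\right)\Pexp\!\left(\int_{(\PP,\gamma)}-tV\right)
\]
for every $(\PP,\gamma)\in\PPMxy$; then the two defining nets of integrals agree term by term and hence have the same limit. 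No further analysis is needed beyond the heat-bound estimates of the previous subsections --- what is left is an exact algebraic computation combined with a change of variables.

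Fix $(\PP,\gamma)\in\PPMxy$ with $\PP=(0=s_0<\dots<s_r=1)$, $\gamma(0)=x$, $\gamma(1)=y$, and abbreviate $\delta_j:=s_j-s_{j-1}$ and $d_j:=d(\gamma(s_{j-1}),\gamma(s_j))$. By Example~\ref{ex:kernel}, $K^4_t(\PP,\gamma)$ is the ordered composition, from $j=r$ on the left down to $j=1$ on the right, of the factors $\kiv(t\delta_j,\gamma(s_j),\gamma(s_{j-1}))$. The rewritten form of $\kiv$ displayed after Lemma~\ref{lem:lastmod} exhibits each such factor as the product of four pieces: the scalar $\chi(d_j)$; the scalar $e(t\delta_j,\gamma(s_j),\gamma(s_{j-1}))$; the scalar $\exp\!\big(\tfrac{t\delta_j}{3}\int_0^1\scal(\bar\gamma_j(\sigma))\,d\sigma\big)$; and the endomorphism $\tau(\bar\gamma_j,\nabla)_1^0\circ\exp\!\big(-t\delta_j\int_0^1\tau(\bar\gamma_j,\nabla)_\sigma^1\circ V(\bar\gamma_j(\sigma))\circ\tau(\bar\gamma_j,\nabla)_1^\sigma\,d\sigma\big)$, where $\bar\gamma_j(\sigma):=\gamma(s_j-\sigma\delta_j)$ is the minimizing geodesic from $\gamma(s_j)$ to $\gamma(s_{j-1})$. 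The strategy is to collect these four types of factor across $j=1,\dots,r$.

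The prefactors $(4\pi t\delta_j)^{m/2}$ in $Z(\PP,m,t)$ cancel the factors $(4\pi t\delta_j)^{-m/2}$ in the $e$-terms, leaving $\prod_j\exp(-d_j^2/4t\delta_j)$; since $\gamma|_{[s_{j-1},s_j]}$ is a constant-speed minimizing geodesic, its energy on that subinterval is $d_j^2/2\delta_j$, so the exponents sum to $\E[\gamma]/2t$ and this product equals $\exp(-\E[\gamma]/2t)$. The product of the cutoff scalars is $\prod_{j=1}^r\chi(d_j)=\Xi(\PP,\gamma)$. For the scalar-curvature pieces, the substitution $s=s_j-\sigma\delta_j$ turns $\tfrac{t\delta_j}{3}\int_0^1\scal(\bar\gamma_j(\sigma))\,d\sigma$ into $\tfrac t3\int_{s_{j-1}}^{s_j}\scal(\gamma(s))\,ds$, so their product is $\exp\!\big(\tfrac t3\int_0^1\scal(\gamma(s))\,ds\big)$. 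Finally, parallel transport depends only on the traced path and its orientation, hence $\tau(\bar\gamma_j,\nabla)_\sigma^{\sigma'}=\tau(\gamma,\nabla)_{\,s_j-\sigma\delta_j}^{\,s_j-\sigma'\delta_j}$; in particular $\tau(\bar\gamma_j,\nabla)_1^0=\tau(\gamma,\nabla)_{s_{j-1}}^{s_j}$, and the same substitution rewrites the endomorphism factor as $\tau(\gamma,\nabla)_{s_{j-1}}^{s_j}\circ\exp\!\big(\int_{s_{j-1}}^{s_j}\tau(\gamma,\nabla)_s^{s_{j-1}}\circ(-tV(\gamma(s)))\circ\tau(\gamma,\nabla)_{s_{j-1}}^s\,ds\big)$, which is exactly the $j$-th factor in the definition of $\Pexp(\int_{(\PP,\gamma)}W)$ with $W=-tV$; the ordered product over $j=r,\dots,1$ therefore reproduces $\Pexp(\int_{(\PP,\gamma)}-tV)$, using the cocycle relations~\eqref{eq:paralleltransport} to match the conjugating transports if one rewrites them. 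Multiplying the four contributions gives the pointwise identity above, and the theorem follows as explained in the first paragraph.

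The one point that requires genuine care --- and hence the main obstacle --- is the parametrization bookkeeping in the last two steps: each $\kiv$-factor refers internally to the minimizing geodesic between its two endpoints parametrized on $[0,1]$, and the integral-kernel convention $q(t,x,y)\in\Hom(E_y,E_x)$ forces that internal geodesic $\bar\gamma_j$ to run \emph{opposite} to $\gamma|_{[s_{j-1},s_j]}$. One must verify that the orientation reversal, the change of variables in the $\scal$- and $V$-integrals, and the placement of the parallel transports conspire to yield precisely the conventions built into the $(\PP,\gamma)$-ordered exponential, and that the composition lands in $\Hom(E_{\gamma(0)},E_{\gamma(1)})=\Hom(E_x,E_y)$, as it must to match $\kH(t,y,x)$.
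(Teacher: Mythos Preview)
Your proposal is correct and follows essentially the same route as the paper: invoke Lemma~\ref{lem:lastmod} and then verify the pointwise identity $Z(\PP,m,t)\,K^4_t(\PP,\gamma)=\Xi(\PP,\gamma)\exp(-\E[\gamma]/2t+\tfrac{t}{3}\int_0^1\scal)\cdot\Pexp(\int_{(\PP,\gamma)}-tV)$ by collecting the four types of factors. Your treatment of the orientation-reversal issue (the internal geodesic $\bar\gamma_j$ running from $\gamma(s_j)$ to $\gamma(s_{j-1})$, opposite to $\gamma|_{[s_{j-1},s_j]}$) is in fact more explicit than the paper's, which simply writes down the result of the computation; your bookkeeping confirming that the transports and conjugations land precisely on the $(\PP,\gamma)$-ordered exponential is exactly what is needed.
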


\begin{proof}
We compute the integrand in the path integral formula for $\kH$ from Lemma~\ref{lem:lastmod}.
\begin{align}
K^4_t(\PP,\gamma)
=\,&
\kiv(t(s_r-s_{r-1}),\gamma(s_r),\gamma(s_{r-1})) 
\circ \cdots \circ \kiv(t(s_1-s_{0}),\gamma(s_1),\gamma(s_{0}))\nonumber\\
=&
\prod_{j=1}^r \chi(d(\gamma(s_j),\gamma(s_{j-1}))) \cdot
\prod_{j=1}^r e(t(s_j-s_{j-1}),\gamma(s_j),\gamma(s_{j-1}))\nonumber\\
&
\times\, \exp\left(\frac{t}{3}\int_0^1 \scal(\gamma(s))\,ds\right) \cdot\tau(\gamma,\nabla)_1^0 \circ \Pexp\left(\int_{(\PP,\gamma)}-tV\right)\nonumber\\
=\,&
\Xi(\PP,\gamma) \cdot Z(\PP,\dim(M),t)^{-1} \cdot \exp\left(-\sum_{j=1}^r\frac{d(\gamma(s_j),\gamma(s_{j-1}))^2}{4t(s_j-s_{j-1})}\right) \nonumber\\
&
\times\, \exp\left(\frac{t}{3}\int_0^1 \scal(\gamma(s))\,ds\right)\cdot \tau(\gamma,\nabla)_1^0 \circ \Pexp\left(\int_{(\PP,\gamma)}-tV\right)
\label{eq:PfadIntegrand}
\end{align}
Since $\gamma$ is a geodesic when restricted to one of the subintervals $[s_{j-1},s_j]$ it is parameterized proportionally to arclength, so that
\[
|\dot{\gamma}(s)| = \frac{d(\gamma(s_{j-1},\gamma(s_j))}{s_j-s_{j-1}},
\]
for all $s\in[s_{j-1},s_j]$.
Thus the energy of $\gamma|_{[s_{j-1},s_j]}$ is given by 
\[
\E[\gamma|_{[s_{j-1},s_j]}] 
=
\frac12 \int_{s_{j-1}}^{s_j} |\dot{\gamma}(s)|^2ds
=
\frac12 \frac{d(\gamma(s_{j-1},\gamma(s_j))^2}{s_{j-1}-s_j}.
\]
Hence the energy of $\gamma:[0,1] \to M$ is given by
\[
\E[\gamma] = \frac12 \sum_{j=1}^r \frac{d(\gamma(s_{j-1},\gamma(s_j))^2}{s_{j-1}-s_j}.
\]
Inserting this into \eqref{eq:PfadIntegrand} yields
\begin{align*}
K^4_t(\PP,\gamma)
=&
\Xi(\PP,\gamma) \cdot Z(\PP,\dim(M),t)^{-1} \cdot \exp\left(-\frac{1}{2t}\E[\gamma]\right) \nonumber\\
&
\times\, \exp\left(\frac{t}{3}\int_0^1 \scal(\gamma(s))\,ds\right)\cdot \tau(\gamma,\nabla)_1^0 \circ \Pexp\left(\int_{(\PP,\gamma)}-tV\right)
\end{align*}
Lemma~\ref{lem:lastmod} concludes the proof.
\end{proof}

\begin{cor}
Let $M$, $E$, $H$, $\nabla$, and $V$ be as in Theorem~\ref{thm:heatkernel}.
Suppose in addition that the potential $V$ is scalar, i.e., $V(x) = v(x)\cdot \id_{E_x}$ for a smooth function $v:M\to \R$.

\vspace{-2mm}
Then the heat kernel of $H$ can be written as a path integral as follows:
\begin{align*}
\kH&(t,y,x) \\ 
&= \fint_\PMtxy \Xi(\PP,\gamma)  \cdot \exp\left(-\frac{\E[\gamma]}{2t} + t\int_0^1\Big(\frac{1}{3} \scal(\gamma(s))-v(\gamma(s))\Big)\,ds\right) \cdot \tau(\gamma,\nabla)_0^1 \,\Dg .
\tag*{\qed}
\end{align*}
\end{cor}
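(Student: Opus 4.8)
The plan is to deduce the Corollary directly from Theorem~\ref{thm:heatkernel} by specializing to a scalar potential; the whole content is that the $(\PP,\gamma)$-ordered exponential degenerates to an ordinary exponential in this case. Assume $V(x)=v(x)\cdot\id_{E_x}$ as in the hypothesis. The one elementary fact I would use is that parallel transport, being a fiber isomorphism, conjugates any scalar multiple of the identity to the same scalar multiple of the identity: for $\sigma,\rho\in[0,1]$ and $\lambda\in\R$,
\[
\tau(\gamma,\nabla)_\sigma^\rho\circ\big(\lambda\cdot\id_{E_{\gamma(\sigma)}}\big)\circ\tau(\gamma,\nabla)_\rho^\sigma=\lambda\cdot\id_{E_{\gamma(\rho)}},
\]
which is immediate from \eqref{eq:paralleltransport}. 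Applied with $\lambda=-t\,v(\gamma(s))$, this gives
\[
\tau(\gamma,\nabla)_s^1\circ\big(-tV(\gamma(s))\big)\circ\tau(\gamma,\nabla)_1^s=-t\,v(\gamma(s))\cdot\id_{E_{\gamma(1)}}
\]
for every $s\in[0,1]$, so these endomorphisms are all scalar multiples of $\id_{E_{\gamma(1)}}$ and hence commute with one another.

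Consequently the hypothesis of the remark stated just before Theorem~\ref{thm:heatkernel} is met with $W=-tV$, and that remark yields
\[
\Pexp\left(\int_{(\PP,\gamma)}-tV\right)=\tau(\gamma,\nabla)_0^1\circ\exp\left(\int_0^1\Big(\tau(\gamma,\nabla)_s^0\circ\big(-tV(\gamma(s))\big)\circ\tau(\gamma,\nabla)_0^s\Big)\,ds\right).
\]
Using the conjugation identity once more, now with basepoint $0$, the integrand equals $-t\,v(\gamma(s))\cdot\id_{E_{\gamma(0)}}$, so its integral equals $-t\int_0^1 v(\gamma(s))\,ds$ times $\id_{E_{\gamma(0)}}$ and therefore
\[
\Pexp\left(\int_{(\PP,\gamma)}-tV\right)=\exp\left(-t\int_0^1 v(\gamma(s))\,ds\right)\cdot\tau(\gamma,\nabla)_0^1 .
\]

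To finish, I would substitute this into the path integral formula of Theorem~\ref{thm:heatkernel} and combine the two scalar exponential factors,
\[
\exp\left(\frac{t}{3}\int_0^1\scal(\gamma(s))\,ds\right)\exp\left(-t\int_0^1 v(\gamma(s))\,ds\right)=\exp\left(t\int_0^1\Big(\frac{1}{3}\scal(\gamma(s))-v(\gamma(s))\Big)\,ds\right),
\]
which produces exactly the stated formula, with $\tau(\gamma,\nabla)_0^1$ as the only remaining endomorphism-valued factor and all integrability and limit assertions inherited verbatim from Theorem~\ref{thm:heatkernel}. There is no genuine obstacle here: the argument is pure bookkeeping resting on the single observation that conjugation by an isomorphism fixes scalars. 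The only point perhaps worth flagging is that, as anticipated in the discussion following the definition of $\Pexp$, the ordered exponential has lost its dependence on the partition $\PP$ in the scalar case, consistent with the partition-independence (up to the cutoff factor $\Xi(\PP,\gamma)$) of the right-hand side of the Corollary.
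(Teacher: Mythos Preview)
Your proposal is correct and matches the paper's approach: the paper states the corollary with a \qed and no separate proof, regarding it as immediate from Theorem~\ref{thm:heatkernel} together with the remark (stated just before the theorem) that for scalar $W$ the ordered exponential collapses to $\tau(\gamma,\nabla)_0^1\circ\exp\big(\int_0^1\tau(\gamma,\nabla)_s^0\circ W(\gamma(s))\circ\tau(\gamma,\nabla)_0^s\,ds\big)$. Your write-up simply makes this implicit step explicit.
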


\end{document}